\newtheorem{thm}{Theorem}[section]%
\newtheorem{defi}[thm]{Definition}%
\newtheorem{prop}[thm]{Proposition}%
\newtheorem{lemme}[thm]{Lemma}%
\newtheorem{cor}[thm]{Corollary}%
\newtheorem{nota}[thm]{Notation}%
\theoremstyle{remark}%
\newtheorem{rmk}[thm]{Remark}%
\newcommand{\Z}{\mathbb{Z}}%
\newcommand{\FF}{\mathbb{F}}%
\newcommand{\ZZ}{\mathbb{Z}}%
\newcommand{\R}{\mathbb{R}}%
\newcommand{\QQ}{\mathbb{Q}}%
\DeclareMathOperator{\Pic}{Pic}%
\DeclareMathOperator{\rk}{rk}%
\DeclareMathOperator{\Fix}{Fix}%
\DeclareMathOperator{\Aut}{Aut}%
\DeclareMathOperator{\Bir}{Bir}%
\DeclareMathOperator{\Vect}{Vect}%
\DeclareMathOperator{\id}{id}%
\DeclareMathOperator{\GL}{GL}%
\DeclareMathOperator{\HH}{H}%
\newcommand{\Q}{\mathbb{Q}}%
\DeclareMathOperator{\C}{\mathbb{C}}%
\DeclareMathOperator{\Ima}{Im}%
\DeclareMathOperator{\Rea}{Re}%
\DeclareMathOperator{\Div}{div}%
\DeclareMathOperator{\Mon}{Mon}%
\DeclareMathOperator{\II}{\textnormal{II}}%
\newcommand{\RR}{\mathbb{R}}%
\newcommand{\eq}[1][r]%
{\ar@<-3pt>@{-}[#1]%
\ar@<-1pt>@{}[#1]|<{}="gauche"%
\ar@<+0pt>@{}[#1]|-{}="milieu"%
\ar@<+1pt>@{}[#1]|>{}="droite"%
\ar@/^2pt/@{-}"gauche";"milieu"%
\ar@/_2pt/@{-}"milieu";"droite"}%
\newcommand{\incl}[1][r]%
  {\ar@<-0.2pc>@{^(-}[#1] \ar@<+0.2pc>@{-}[#1]}%
\begin{document}%
\title{\bf Automorphisms of Nikulin-type orbifolds}%
\author{Simon \textsc{Brandhorst}\\%
  \texttt{brandhorst@math.uni-sb.de}%
  \and%
  Gr{\'e}goire \textsc{Menet}\\%
  \texttt{gregoire.menet@ac-amiens.fr}%
  \and%
  Stevell \textsc{Muller}\\%
  \texttt{muller@math.uni-hannover.de}%
  }%

\maketitle%
\abstract{Nikulin-type orbifolds are certain singular 4-dimensional irreducible holomorphic symplectic varieties.  
We show that the monodromy group of Nikulin-type orbifolds is maximal and classify finite order symplectic automorphisms up to deformation in terms of their action on the second integral cohomology group.}%
\section{Introduction}%
In recent years, (singular) irreducible holomorphic symplectic (IHS) varieties have received increasing attention, in particular due to the generalizations of the Bogomolov decomposition theorem in this setting. It is natural to study their symmetries. In this paper, we provide the first complete classification of symplectic automorphisms of finite order for a family of singular IHS varieties.

The generalization of the global Torelli theorem from smooth to singular IHS varieties (see for instance \cite{Bakker,Ulrike0}) allows one to classify automorphisms in terms of their action on the second integral cohomology group. For a fixed deformation type, these actions can be enumerated using an algorithm developed in \cite{brandhorst-hofmann}. To apply the global Torelli theorem, one needs three deformation invariants: The Beauville--Bogomolov form, the wall divisors, and the monodromy group.%

A class of varieties for which the first two are well understood is given by Nikulin-type orbifolds.
These orbifolds are deformation equivalent to special four-dimensional orbifolds known as Nikulin orbifolds. The latter are obtained as partial resolutions in codimension 2 of quotients by symplectic involutions of $\textnormal{K3}^{[2]}$-type manifolds (see \Cref{nota}). 
The Beauville--Bogomolov form for Nikulin-type orbifolds has been calculated in \cite{Lol3} and the wall divisors have been determined in \cite{Ulrike2}. It remains to determine the monodromy group, which is the content of our first main result (compare with \Cref{thm: monodromy}; more recently an alternative proof is given by Nanni \cite{Nanni}).%

For a lattice $L$ of signature $(3,\rk L-3)$ we denote by $O^+(L)$ the subgroup of $O(L)$ consisting of \emph{orientation-preserving isometries} (see \Cref{orient preserving in O+} for an explanation of this terminology). It is known that $\Mon^2(X) \subseteq O^+(\HH^2(X,\ZZ))$ for any IHS variety $X$ (see \Cref{reminders period}).%
\begin{thm}\label{mainintro}%
Let $X$ be an irreducible symplectic orbifold of Nikulin-type and $\Mon^2(X)$ its monodromy group. Then%
\[\Mon^2(X) = O^+(\HH^2(X,\ZZ)).\]%
\end{thm}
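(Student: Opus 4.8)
The plan is to prove the nontrivial inclusion $O^+(\HH^2(X,\ZZ)) \subseteq \Mon^2(X)$, the reverse one being known for every irreducible symplectic orbifold. Since $\Mon^2$ depends only on the deformation class, I may assume $X$ is an actual Nikulin orbifold: the partial resolution in codimension $2$ of $M/\iota$, where $M$ is of $K3^{[2]}$-type and $\iota$ is a symplectic involution. Write $p\colon M\dashrightarrow X$ for the induced degree-two rational map, $\pi\colon X\to M/\iota$ for the resolution, and $\delta\in\HH^2(X,\ZZ)$ for the distinguished class supported on the exceptional divisor. I will freely use the explicit description of the lattice $\HH^2(X,\ZZ)$, of its discriminant form, and of the embedding of $\delta^{\perp}$ into it from \cite{Lol3}, together with the classification of wall divisors from \cite{Ulrike2}.

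The first source of monodromy operators is the $K3^{[2]}$-type cover. The pair $(M,\iota)$ deforms in a smooth moduli space, and parallel transport along such deformations produces elements of $\Mon^2(M)$ commuting with $\iota^{*}$; since the codimension-$2$ resolution is canonically attached to $\iota$, each of them is realised by parallel transport in the corresponding family of Nikulin orbifolds, hence gives a monodromy operator of $X$ that fixes the effective class $\delta$ and acts on $\delta^{\perp}$ through its action on the $\iota$-invariant lattice $\HH^2(M,\ZZ)^{\iota}$. Invoking Markman's identification $\Mon^2(M)=O^+(\HH^2(M,\ZZ))$ for $K3^{[2]}$-type and the analysis of isometries commuting with a symplectic involution, this realises inside $\Mon^2(X)$ the full monodromy group of the pair; what I need from it is that, transported along $\delta^{\perp}\cong\HH^2(M,\ZZ)^{\iota}$, this group contains at least the stable orthogonal group $\widetilde{O}^+(\delta^{\perp})$, i.e. every orientation-preserving isometry of $\delta^{\perp}$ acting trivially on its discriminant group.

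The second source is the orbifold $X$ itself: for every wall divisor $v\in\HH^2(X,\ZZ)$ the reflection $R_v$ lies in $\Mon^2(X)$, exactly as in the smooth case via the hyperkähler metric and the global Torelli theorem for irreducible symplectic orbifolds. Among the classes listed in \cite{Ulrike2} one finds, besides $(-2)$-classes of divisibility $1$ inside $\delta^{\perp}$, wall divisors of divisibility $2$ — in particular $\delta$ and classes of the form $\delta+w$ with $w\in\delta^{\perp}$ suitable — whose reflections move $\delta$ and act nontrivially on the discriminant group of $\HH^2(X,\ZZ)$. Let $G\subseteq\Mon^2(X)$ be the subgroup generated by all the operators produced in this paragraph and the previous one; it remains to show $G=O^+(\HH^2(X,\ZZ))$.

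This last step is the crux, and the main obstacle is that $\HH^2(X,\ZZ)$ contains no orthogonal summand $U\oplus U$ (the hyperbolic parts of the lattice of \cite{Lol3} are $2$-divisible), so Eichler's criterion cannot be used to conclude directly that $O^+$ is generated by reflections. Instead I would argue through the discriminant form: first, $\widetilde{O}^+(\delta^{\perp})$ together with the reflections in divisibility-$1$ $(-2)$-classes and the gluing data between $\delta^{\perp}$ and $\ZZ\delta$ should generate $\widetilde{O}^+(\HH^2(X,\ZZ))$; second, the reflections in the divisibility-$2$ wall divisors should surject onto the image of $O^+(\HH^2(X,\ZZ))$ in the orthogonal group of the discriminant form. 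Since $\widetilde{O}^+(\HH^2(X,\ZZ))$ is exactly the kernel of that surjection, the two statements together force $G=O^+(\HH^2(X,\ZZ))$, proving the theorem. Both reduce to finite computations with the explicit discriminant form of \cite{Lol3} and the explicit list of wall divisors of \cite{Ulrike2}, the delicate point being to keep track of the gluing between $\delta$ and $\delta^{\perp}$ when passing from the cover to $X$.
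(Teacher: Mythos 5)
Your overall architecture --- realise enough reflections as monodromy operators, then show that the subgroup they generate is all of $O^+(\Lambda)$ by treating separately the kernel and the image of $O^+(\Lambda)\to O(D_\Lambda)$ --- is the same as the paper's, and your observation that Eichler's criterion is unavailable is correct. But two of your steps hide genuine gaps. The claim that ``for every wall divisor $v$ the reflection $R_v$ lies in $\Mon^2(X)$, exactly as in the smooth case'' is false as stated and unjustified where it matters: the wall divisors here include classes with $v^2=-6$ and $v^2=-12$ of divisibility $2$, for which $R_v$ is not even an integral isometry, and for the classes you actually need ($v^2=-2$, and $v^2=-4$ with $\Div_\Lambda(v)=2$) there is no soft argument from ``the hyperk\"ahler metric and global Torelli''. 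The paper's \Cref{cor:reflections are monodromies} instead uses the decomposition of these classes into six $\Mon^2$-orbits from \cite[Theorem 6.15]{Ulrike2} and then realises one reflection per orbit geometrically: by push-pull of $(-2)$-reflections from $S^{[2]}$ via \cite[Proposition 3.9]{Ulrike2}, by the specific operators $R_{\Sigma'}$ and $R_{(\delta'+\Sigma')/2}$ of \cite[Corollary 4.6, Remark 4.7]{Ulrike2}, and by conjugating with a known monodromy for the remaining two orbits. Your first source likewise presupposes knowledge of the monodromy group of the pair $(M,\iota)$ and that it induces $\widetilde{O}^+(\delta^\perp)$ on $X$, neither of which is off the shelf.

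More seriously, your sentence ``$\widetilde{O}^+(\delta^\perp)$ together with the reflections in divisibility-$1$ $(-2)$-classes and the gluing data should generate $\widetilde{O}^+(\HH^2(X,\ZZ))$'' is the crux of the whole theorem, it is a statement about an infinite group and not a finite computation with the discriminant form, and you supply no mechanism for it. The paper's key input at this point is Kneser's theorem \cite[Satz 4]{Kneser:generation_by_reflection}: since $\Lambda=U(2)^{\oplus3}\oplus E_8\oplus A_1^{\oplus2}$ contains a unimodular $E_8$ and has Witt index $3$, the group $O^{\sharp,+}(\Lambda)$ is generated by reflections in $(-2)$-vectors --- applied directly to $\Lambda$, with no detour through $\delta^\perp$. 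Without this (or an equivalent generation-by-reflections theorem) your argument does not close. By contrast, your discriminant step is sound in outcome and is exactly the content of \Cref{lem:OD_Lambda_generated_by_reflections} and \Cref{lem:exists_lift_square_4}: $O(D_\Lambda)\cong C_2\times O_7(2)$ is generated by reflections, and every reflection of $D_\Lambda$ lifts to $R_x$ for some $x\in\Lambda$ with $x^2=-4$ and $\Div_\Lambda(x)=2$, so the reflection subgroup surjects onto $O(D_\Lambda)$ and hence equals $O^+(\Lambda)$.
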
%
To prove the theorem, we first use the results of \cite[Section 6]{Ulrike2} to prove the existence of many monodromy operators given by reflections. We then use lattice theoretical arguments to show that $\Mon^2(X)$ is generated by such reflections, and that it is maximal. \Cref{mainintro} has several immediate and useful consequences, which we record in the following remarks.
\begin{rmk}%
As a consequence of the theorem, an isometry $\HH^2(X,\Z)\rightarrow \HH^2(Y,\Z)$ is a parallel transport operator if and only if it respects the natural orientations (see \cite[Section~4]{Markman} for the definitions).%
\end{rmk}%
\begin{rmk}%
The coarse moduli space of $\Lambda$-marked orbifolds of Nikulin-type, where
\[
\Lambda \coloneqq U(2)^{\oplus 3} \oplus E_8 \oplus A_1^{\oplus 2},
\]
has two connected components.
Indeed, two $\Lambda$-marked pairs $(X,\phi)$ and $(X,g\circ\phi)$, where $g$ is a monodromy operator, are related by parallel transport in a deformation family and hence belong to the same connected component of the moduli space.
Thus the connected components are indexed by $O(\Lambda)/\Mon^2(\Lambda)$, which has cardinality two, since $O^+(\Lambda)$ has index two in $O(\Lambda)$.
\end{rmk}%
We now have all the ingredients to classify symplectic birational automorphisms of Nikulin-type orbifolds. Our approach is computer-aided, and the computations are carried out using the software OSCAR \cite{oscar-book, oscar-system}. The resulting classification is described by \Cref{prop type of symp bir} and \Cref{tab: table symplectic}. Among the regular automorphisms, we find the following cases:%
\begin{itemize}%
\item
Automorphisms that are deformation equivalent to those induced from an automorphism on a manifold of $\textnormal{K3}^{[2]}$-type. We call these automorphisms \emph{standard} (see  \Cref{defistandard}).%
\item
Certain involutions that we call \emph{exceptional} (see \Cref{defiinvolution}). A geometric example of such an involution can be obtained by exchanging the role of the two exceptional divisors on a Nikulin orbifold constructed from the Hilbert scheme of 2 points on a K3 surface. This  type of involutions is introduced in \cite[Section 8.2]{Ulrike2}.%
\item
Two new types of automorphisms: One of order 5 and one involution with anti-invariant lattice $D_{10}(2)$. At the time this paper is written, no explicit geometric realizations are known for them.
\item
Compositions of the previous cases.%
\end{itemize}%

The methods developed in this paper can be adapted to classify symplectic birational automorphisms of other deformation types of IHS orbifolds. Several examples of such orbifolds have already been provided \cite{Mauri,Lol6}.\medskip

The paper is organized as follows. After providing some reminders in \Cref{reminders}, we prove \Cref{mainintro} in \Cref{monodromysection}. Finally, \Cref{classificationsection} is dedicated to establishing the classification of automorphisms of Nikulin-type orbifolds.%

\subsubsection*{Acknowledgments}%
We would like to thank the organizers of the 
workshop ``Hyperkahler quotients, singularities, and quivers'' taking place at the Simons Center for Geometry and Physics in 2023, where the idea for this paper was conceived. We also express our appreciation to the Simons Center for its generous hospitality. 
We would also like to thank the anonymous referees for their suggestions and comments, which really helped improving the quality of the paper.
The second author was supported by PRCI SMAGP (ANR-20-CE40-0026-01). The first and third authors were supported by the Deutsche Forschungsgemeinschaft (DFG, German Research Foundation) -- Project-ID 286237555 - TRR 195 (Gef\"ordert durch die Deutsche Forschungsgemeinschaft -- Projektnummer 286237555 - TRR 195).%

\section{Reminders and notation}\label{reminders}%

\subsection{Lattices}%
We define a \emph{lattice} $(L, q)$ to be a finitely generated free $\mathbb{Z}$-module $L$ equipped with a $\mathbb{Q}$-valued nondegenerate symmetric bilinear form $q$. If $q(L, L)\subseteq \mathbb{Z}$, we call the lattice \emph{integral}. A lattice $(L, q)$ is \emph{even} if $q(x,x)\in 2\mathbb{Z}$ for all $x\in L$. Even lattices are integral. If $q$ is clear from the context, we often drop $q$ from the notation, and for all $x,y\in L$, we denote $x.y \coloneqq q(x,y)$ and $q(x) \coloneqq x^2\coloneqq x.x$. We write $L(-1)$ for $(L,-q)$. Note that the orthogonal groups $O(L)$ and $O(L(-1))$ are equal. 
Given an even lattice $L$, we denote its dual by $L^\vee$, and we define its discriminant group $D_L \coloneqq L^\vee/L$. It is equipped with a torsion quadratic form%
\[q_{D_L}\colon D_L\to \mathbb{Q}/2\mathbb{Z},\, x+L\mapsto x^2+2\mathbb{Z}.\]%
The \emph{divisibility} $\mathrm{div}_L(x)$ of $x \in L$ is the natural number $n \in \mathbb{N}$ with $x.L=n\ZZ$. 
For $G \leq O(L)$ we denote by $L^G= \{x \in L | \forall g \in G: g(x) = x \}$ the \emph{invariant lattice} of $G$ and by $L_G=(L^G)^\perp$ the \emph{coinvariant lattice}. A similar notation applies to a single $g\in G$.%

We follow the convention that ADE root lattices are negative definite, and we denote by $U$ the hyperbolic plane lattice. 
We denote the orthogonal direct sum of two lattices $A,B$ by $A \oplus B$.

\subsubsection*{Subgroups of isometries}
For $(L, q)$ an integral lattice we denote by $O^\sharp(L)$ the kernel of the natural map $O(L) \to O(D_L)$, by $SO(L)$ the normal subgroup of isometries of determinant $+1$ and by $O^{\pm}(L)$ the kernel of the real spinor norm with respect to $\mp q$.

\begin{rmk}
    This last convention is typical in the literature on IHS varieties, but atypical in the quadratic forms world.
\end{rmk}

We set $O^{\sharp,\pm}(L) \coloneqq O^\sharp(L) \cap O^\pm(L)$, and $SO^{\sharp,\pm}(L) \coloneqq SO(L)\cap O^{\sharp,\pm}(L)$. We define also similarly $O^{\pm}(V)$ and $SO(V)$ for any real quadratic space $V$.

\subsubsection*{Orientation}
Let now $(L, q)$ be a lattice of signature $(3, \ast)$. We call \emph{(real) positive three-space} in $L$ any three-dimensional subvector space $W\subseteq L\otimes \mathbb{R}$ such that $q_{|W}$ is positive definite. Any choice of a basis of $W$, or an $SO(W)$-orbit of bases, determines what we call an \emph{orientation} on $W$, in which case we say that $W$ is \emph{oriented}. Let $C_L = \{x \in L \otimes \R \mid x^2 >0 \}$ be the positive cone of $L$. According to Markman \cite[Lemma 4.1]{Markman}, any positive three-space $W$ in $L$ satisfies that $W\setminus\{0\}$ is deformation retract to $C_L$, and the choice of an orientation on $W$ fixes an \emph{orientation} on $C_L$, i.e. a generator of $\HH^2(C_L, \mathbb{Z})\cong \mathbb{Z}$. The following is well known:

\begin{lemme}\label{orient preserving in O+}
    The subgroup of $O(L)$ preserving the orientation on $C_L$ coincides with $O^+(L)$.
\end{lemme}

\begin{proof}
    By the Cartan--Dieudonn\'e theorem, any isometry of $L\otimes \RR$ decomposes as the composition of finitely many reflections in anisotropic vectors of $L\otimes \mathbb{R}$. Hence it suffices to determine which reflections preserve orientations.

    Fix an orientation on $C_L$, i.e. we fix an orientation on any positive three-space in $L$. Let $v\in L\otimes \mathbb{R}$ be anisotropic and let $R_v\in O(L\otimes \mathbb{R})$ be the associated reflection. It is defined by:
    \[R_v(x) = x - 2\frac{x.v}{v^2}v,\qquad \forall x\in L\otimes\mathbb{R}.\]
    If $v^2 < 0$, meaning that $R_v\in O^+(L\otimes \mathbb{R})$, then $R_v$ acts trivially on any positive three-space in $v^\perp$. Therefore $R_v$ preserves the orientation on $C_L$. Otherwise, if $v^2 > 0$, we complete $v$ to an orthogonal basis $\{v,a,b,x_1,\ldots x_{r-3}\}$ of $L\otimes \mathbb{R}$, where $r$ is the rank of $L$ and $a,b$ are vectors of positive square. In particular, $R_v$ preserves the positive three-space $W$ spanned by $\{v,a,b\}$ and its restriction to $W$ has determinant $-1$. Hence $R_v$ does not preserve the orientation of $W$, and $R_v$ does not preserve the orientation on $C_L$. It follows that an isometry $f\in O(L)$ preserves the orientation on $C_L$ if and only if $f_{\mathbb{R}}$ decomposes into a composition of reflections where only an even number of them are defined by positive vectors. By our convention, the latter is equivalent to saying that $f\in O^+(L)$.
\end{proof}

\subsection{Orbifolds of Nilukin-type}\label{nota}%
\begin{defi}\label{Niku}%
 \item%
 Let $X$ be a manifold of $\textnormal{K3}^{[2]}$-type endowed with a symplectic involution $i$. We denote by \[\pi\colon X\rightarrow X/i \eqqcolon M\]%
 the quotient map. By \cite[Theorem 4.1]{Mongardi}, we know that $\Fix i = \Sigma \cup \left\{\ 28\ points\ \right\}$ where $\Sigma$ is a \textnormal{K3} surface. For simplicity, $\pi(\Sigma)$ is also denoted by $\Sigma$. Let $r:M'\rightarrow M$ be the blow-up of $M$ in $\Sigma$. The exceptional divisor is denoted by $\Sigma'$. The orbifold $M'$ is called a \emph{Nikulin orbifold}.%
\end{defi}%
We summarize our notation in the following diagram:%

\[
\begin{tikzcd}
    \Sigma'\arrow[d, hook]\arrow[r]&\Sigma\arrow[d, hook]&\\
    M'\arrow[r, "r"']&M&X\arrow[l, "\pi"]
\end{tikzcd}
\]

\begin{nota}%
If $X=S^{[2]}$ for $S$ a \textnormal{K3} surface, we set $\delta'\coloneqq r^*(\pi_*(\delta))$ where $\delta$ is half of the class of the diagonal in $S^{[2]}$.%
\end{nota}%
Recall that a compact K\"ahler orbifold is called irreducible symplectic if its singular locus has codimension at least $4$, and its smooth locus is simply connected and carries a unique (up to scalar) nondegenerate holomorphic 2-form.%
\begin{rmk}%
 A Nikulin orbifold is an irreducible symplectic orbifold \cite[Proposition 3.8]{Lol4}.%
\end{rmk}%
\begin{defi}%
 An irreducible symplectic orbifold which is deformation equivalent to a Nikulin orbifold is called an \emph{orbifold of Nilukin-type}.%
\end{defi}%
\subsection{Beauville--Bogomolov form}%
Let $Y$ be an irreducible symplectic orbifold. 
We recall from \cite[Theorem 3.17]{Lol4} that $\HH^2(Y,\Z)$ is endowed with a primitive nondegenerate integral bilinear form: The Beauville--Bogomolov form. This form will be denoted by $q_Y$.%
%
%
%

%
Let $M$ and $M'$ be as in \Cref{Niku}.
We equip $M$ with a Beauville--Bogomolov form by setting:
\[
q_M(x) \coloneqq q_{M'}\bigl(r^*(x)\bigr),
\qquad \forall x \in \HH^2(M,\mathbb{Z}).
\]

%
We denote by $j:\HH^2(S,\Z)\rightarrow \HH^2(S^{[2]},\Z)$ the \emph{Beauville map} (see \cite[Part 2, Proposition 6]{Beauville} for the construction). The map $j$ respects the pairings and provides an orthogonal decomposition 
$$\HH^2(S^{[2]},\Z)=j(\HH^2(S,\Z))\oplus\Z \delta.$$
We recall the following theorem. 

\begin{thm}[{{\cite[Theorem 3.6]{Ulrike2}}}]\label{BBform}%
Let $M$ and $M'$ be as in \Cref{Niku} for $X = S^{[2]}$.%
\begin{enumerate}[(i)]%
\item%
The Beauville--Bogomolov form of $M'$ is given by 
$(\HH^2(M',\Z),q_{M'})\simeq U(2)^{\oplus3}\oplus E_8\oplus A_1^{\oplus2}$ where the Fujiki constant is equal to 6.%
\item
$q_{M}(\pi_*(x))=2q_{S^{[2]}}(x)$ for all $x\in \HH^2(S^{[2]},\Z)^{\iota^{[2]}}$.%
\item
$q_{M'}(\delta')=q_{M'}(\Sigma')=-4$.%
\item
$q_{M'}(r^*(x),\Sigma')=0$ for all $x\in \HH^{2}(M,\Z)$.%
\item
$\HH^2(M',\Z)=r^*\pi_*(j(\HH^2(S,\Z)))\oplus \Z\frac{\delta'+\Sigma'}{2}\oplus \Z\frac{\delta'-\Sigma'}{2}$.%
\end{enumerate}%
\end{thm}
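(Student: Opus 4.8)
The plan is to derive the whole statement from the Beauville decomposition of $\HH^2(S^{[2]},\Z)$ together with the known shape of a symplectic involution, by pushing forward along $\pi$, pulling back along $r$, and reading off the Beauville--Bogomolov form from the Fujiki relation. Throughout I take $i=\iota^{[2]}$ to be induced by a symplectic (Nikulin) involution $\iota$ on $S$; this is no loss of generality, since $\HH^2(-,\Z)$ with its form is a deformation invariant. The first step is to compute the invariant lattice: writing $\HH^2(S^{[2]},\Z)=j(\HH^2(S,\Z))\oplus\Z\delta$ with $q_{S^{[2]}}(\delta)=-2$ and Fujiki constant $3$, the involution $i$ fixes $\delta$ and acts on $j(\HH^2(S,\Z))$ as $\iota^*$. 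By the classification of symplectic involutions on K3 surfaces the coinvariant lattice is $\HH^2(S,\Z)_\iota\cong E_8(2)$, hence $\HH^2(S,\Z)^\iota\cong U^{\oplus3}\oplus E_8(2)$ (the unique even lattice of signature $(3,11)$ with discriminant form $q_{E_8(2)}$, by Nikulin's uniqueness criterion), and therefore $\HH^2(S^{[2]},\Z)^{i}\cong U^{\oplus3}\oplus E_8(2)\oplus\langle-2\rangle$ with coinvariant $\cong E_8(2)$.

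Next I pass to the quotient $\pi\colon X\to M$. Since $\pi$ is a quotient by an involution, unramified in codimension one, $\pi^*\pi_*=\id+i^*$ and $\pi_*\pi^*=2$, so $\pi_*$ embeds $\HH^2(X,\Z)^{i}$ into $\HH^2(M,\Z)$; item $(ii)$ is then formal from the Fujiki relation: for invariant $x$ one has $(\pi^*\pi_*x)^4=(2x)^4=16x^4$ and $\deg\pi=2$, giving $\int_M(\pi_*x)^4=24\,q_{S^{[2]}}(x)^2=c_M\,q_M(\pi_*x)^2$, so with $c_M=c_{M'}=6$ (the Fujiki constant being a birational invariant) we get $q_M(\pi_*x)=2\,q_{S^{[2]}}(x)$ after fixing signs on a K\"ahler class. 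The real content is the \emph{integral} structure: one must show $\HH^2(M,\Z)=\pi_*j(\HH^2(S,\Z))\oplus\Z\pi_*\delta$ by analysing the integral cohomology of the cyclic quotient near its fixed locus — the transverse $A_1$-singularities along $\Sigma$ and the $28$ isolated points modelled on $\C^4/\pm1$. A direct computation then shows $\ker(\pi_*j)=\HH^2(S,\Z)_\iota\cong E_8(2)$ and $q_M(\pi_*j(y))=q_S(y,y+\iota^*y)$, with $q_S$ the intersection form of $S$; using the model in which $\iota$ exchanges the two copies of $E_8$ inside $U^{\oplus3}\oplus E_8^{\oplus2}$, this induced form on $\HH^2(S,\Z)/E_8(2)$ is $U(2)^{\oplus3}\oplus E_8$, so $\HH^2(M,\Z)\cong U(2)^{\oplus3}\oplus E_8\oplus\langle-4\rangle$, the last summand being $\Z\pi_*\delta$.

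Then comes the blow-up $r$ and the assembly. Pullback embeds $\HH^2(M,\Z)$ as the orthogonal complement of $\Sigma'$ in $\HH^2(M',\Z)$ — this is item $(iv)$, coming for the $j(\HH^2(S,\Z))$-part from $j(\HH^2(S,\Z))\perp\delta$ and the projection formula. Resolving a surface of transverse $A_1$-singularities is not the blow-up of a smooth centre: besides $\Sigma'$ it introduces the class $\tfrac{\delta'+\Sigma'}{2}$, which is effective and reflects the fact that $M'$ is a double cover, branched along $\Sigma'$, of the blow-up of the K3 surface $\Sigma\subset X$; this gives item $(v)$, namely $\HH^2(M',\Z)=r^*\pi_*j(\HH^2(S,\Z))\oplus\Z\tfrac{\delta'+\Sigma'}{2}\oplus\Z\tfrac{\delta'-\Sigma'}{2}$. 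Now $q_{M'}(\delta')=q_M(\pi_*\delta)=2\,q_{S^{[2]}}(\delta)=-4$ by $(ii)$, and $q_{M'}(\Sigma')=-4$ either by a direct Fujiki evaluation of $\int_{M'}\Sigma'^{4}$ (using that $\Sigma\subset M$ is the image of a symplectic surface) or a posteriori from $q_{M'}(\tfrac{\delta'\pm\Sigma'}{2})=-2$; this is item $(iii)$, and it also forces $(\delta',\Sigma')_{q_{M'}}=0$. Finally $r^*\pi_*j(\HH^2(S,\Z))\cong U(2)^{\oplus3}\oplus E_8$ by the previous step, while $\Z\tfrac{\delta'+\Sigma'}{2}\oplus\Z\tfrac{\delta'-\Sigma'}{2}\cong A_1^{\oplus2}$ (two orthogonal vectors of square $-2$), so $\HH^2(M',\Z)\cong U(2)^{\oplus3}\oplus E_8\oplus A_1^{\oplus2}$ and the Fujiki constant $6$ drops out of one evaluation; this proves item $(i)$.

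I expect the main obstacle to be the determination of exactly which \emph{integral} classes live on $M$ and on $M'$ — that no half-integral glue vectors occur beyond $\pi_*j(\HH^2(S,\Z))$ and $\tfrac{\delta'\pm\Sigma'}{2}$. Controlling $\HH^2$ of the cyclic quotient and of the partial resolution is the heart of the matter; the robust route is a Mayer--Vietoris / Smith-sequence analysis localised at the fixed locus, but one can also close the argument by pinning down the genus of $\HH^2(M',\Z)$ (rank $16$, signature $(3,13)$, $2$-elementary, discriminant length $8$) from the construction and then invoking Nikulin's uniqueness theorem.
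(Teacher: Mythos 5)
You should first note that the paper does not prove this statement at all: it is imported verbatim as \cite[Theorem 3.6]{Ulrike2} (building on the lattice computation in \cite{Lol3}), so there is no internal proof to compare against. Your outline does reconstruct the strategy of those references --- reduce to a natural involution $\iota^{[2]}$ via Mongardi's classification, use $\pi^*\pi_*=\id+i^*$, $\pi_*\pi^*=2$ and the Fujiki relation for the quotient, then analyse the partial resolution --- and your intermediate computations are right where you carry them out (the invariant lattice $U^{\oplus3}\oplus E_8(2)\oplus\langle-2\rangle$, the formula $q_M(\pi_*j(y))=q_S(y,y+\iota^*y)$, and the identification of $\pi_*j(\HH^2(S,\Z))$ with $U(2)^{\oplus3}\oplus E_8$ via the model where $\iota$ swaps two copies of $E_8$).

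The genuine gap is that the step you yourself identify as ``the heart of the matter'' is asserted rather than proved. The entire content of items (i), (iii) and (v) beyond the rational/numerical level is the integral structure: that $\HH^2(M,\Z)$ is exactly $\pi_*j(\HH^2(S,\Z))\oplus\Z\pi_*\delta$ with no extra glue, that $\tfrac{\delta'\pm\Sigma'}{2}$ are integral classes on $M'$, and that nothing further is added. You dispose of this with ``a direct computation then shows'' and a pointer to a Mayer--Vietoris/Smith-sequence analysis that is never set up; this is precisely the part that occupies the bulk of the proof in \cite{Lol3}. Two further points would need repair even as a sketch. First, your derivation of (ii) from the Fujiki relation presupposes $c_{M'}=6$, but the value $6$ is itself equivalent to the integral normalisation of $q_{M'}$ being the primitive one on all of $\HH^2(M',\Z)$ --- the Fujiki relation alone only gives $q_M(\pi_*x)=\lambda\, q_{S^{[2]}}(x)$ for some positive constant $\lambda$, and $\lambda=2$ is pinned down only after the lattice is known, so the argument as written is circular. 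Second, the proposed shortcut of computing the genus of $\HH^2(M',\Z)$ and invoking Nikulin's uniqueness could at best deliver the abstract isometry class in (i); it says nothing about the specific classes $\delta',\Sigma'$, their squares, the orthogonality in (iv), or the explicit direct-sum decomposition in (v), and establishing the genus invariants would in any case again require the integral computation you are trying to avoid.
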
%
\begin{rmk}%
 In the previous theorem the Beauville--Bogomolov form of $M'$ is obtained as follows:%
 \begin{itemize}%
\item%
$r^*\pi_*(j(\HH^2(S,\Z)))\simeq U(2)^{\oplus 3}\oplus E_8$.%
\item%
$\Z\frac{\delta'+\Sigma'}{2}\oplus\Z\frac{\delta'-\Sigma'}{2}%
\simeq A_1^{\oplus 2}.$%
\end{itemize}%
\end{rmk}%
\begin{nota}%
 We set $\Lambda\coloneqq U(2)^{\oplus3}\oplus E_8\oplus A_1^{\oplus2}$.%
\end{nota}%

\subsection{Deformation, period map and birational automorphisms}\label{reminders period}

\subsubsection*{Deformation and parallel transport}
\begin{defi}\label{defi}
A \emph{deformation} of an irreducible symplectic orbifold $X$ (resp.\ of a pair $(X,g)$,
where $g$ is an automorphism of $X$) is a proper and flat morphism
$s\colon \mathcal{X}\to B$, over an analytic base $B$, together with a distinguished point $o\in B$ and an
identification $X\simeq \mathcal{X}_o$, such that every fiber of $s$ is an irreducible
symplectic orbifold (resp.\ together with an automorphism $\mathfrak g$ of
$\mathcal X$ satisfying $s\circ\mathfrak g=s$ and whose restriction to the central
fiber coincides with $g$).
%
\end{defi}
\begin{rmk}
Note that a deformation of irreducible symplectic orbifolds is always locally trivial (see \cite[Proposition 3.10]{Lol4}).
\end{rmk}


\begin{defi}\label{transp}
Let $X_1$ and $X_2$ be two irreducible symplectic orbifolds. We call an isometry $f\colon\HH^{2}(X_{1},\Z)\rightarrow \HH^{2}(X_{2},\Z)$ \emph{parallel transport operator} if there exists a deformation $s\colon\mathcal{X}\rightarrow B$, two points $b_1,\,b_2\in B$, two isomorphisms $\psi_{i}\colon X_{i}\rightarrow \mathcal{X}_{b_{i}}$, $i=1,2$ and a continuous path $\gamma\colon\left[0,1\right]\rightarrow B$ with $\gamma(0)=b_{1}$, $\gamma(1)=b_{2}$ and such that the parallel transport in the local system $R^2s_{*}\Z$ along $\gamma$ induces the morphism $\psi_{2*}\circ f\circ\psi_{1}^{*}\colon \HH^{2}(\mathcal{X}_{b_{1}},\Z)\rightarrow \HH^{2}(\mathcal{X}_{b_{2}},\Z)$.
\end{defi}
\begin{defi}\label{Mon}
Let $X$ be an irreducible symplectic orbifold. The group of parallel transport operators from $X$ to itself is a subgroup of $O(\HH^2(X,\Z))$, called the \emph{monodromy group} of $X$, and denoted by $\Mon^2(X)$. 
\end{defi}
\begin{rmk}
We recall that monodromy operators preserve the Beauville--Bogomolov form, since this
form is uniquely characterized by the Fujiki relation, which is invariant under
parallel transport in deformation families.
\end{rmk}
\begin{rmk}\label{orientation}
%
%
%
%
We sketch a proof of the well-known fact that $\Mon^2(X)\subseteq O^+(\HH^2(X,\mathbb Z))$.
Let $f\in\Mon^2(X)$. Then there exist a deformation $s\colon\mathcal X\to B$ of $X$ and a loop
$\gamma\colon [0,1]\to B$ based at $o\in B$ such that the parallel transport along $\gamma$
induces $f$ on $\HH^2(X,\mathbb Z)$. 
Pulling back the local system $R^2 s_*\mathbb R$ via the path
$\gamma\colon [0,1]\to S$ yields a local system on $[0,1]$, which is necessarily trivial
since $[0,1]$ is simply-connected. Hence the cohomology groups
$\HH^2(\mathcal X_{\gamma(t)},\mathbb R)$ can be canonically identified with $\HH^2(X,\mathbb R)$. 

Fix $0\neq\sigma_o\in \HH^{2,0}(X)$ and a K\"ahler class $\omega_o$, and set
$P_o\coloneqq\langle \Rea(\sigma_o),\Ima(\sigma_o),\omega_o\rangle\subset \HH^2(X,\mathbb R)$.
This is a positive three-space, oriented by the basis $(\Rea(\sigma_o),\Ima(\sigma_o),\omega_o)$.
Along $\gamma(t)$, the real plane $\Pi_t\coloneqq\langle \Rea(\sigma_{\gamma(t)}),\Ima(\sigma_{\gamma(t)})\rangle$
varies continuously, and one can choose locally a continuous family of K\"ahler classes
$\omega_{\gamma(t)}$, so the oriented positive three-space
$$P_t\coloneqq\langle \Rea(\sigma_{\gamma(t)}),\Ima(\sigma_{\gamma(t)}),\omega_{\gamma(t)}\rangle$$
varies continuously in the Grassmannian of positive three-spaces.
Since the two orientations correspond to two connected components, the orientation cannot
change along a continuous path; hence the parallel transport $f$ preserves the orientation,
i.e.\ $f\in O^+(H^2(X,\mathbb Z))$ (see \Cref{orient preserving in O+}).
%
\end{rmk}


\subsubsection*{Moduli space of marked irreducible symplectic orbifolds}
Let $X$ be an irreducible symplectic orbifold of a given abstract deformation type denoted $T$, and let $\Lambda_T$ be a lattice of signature $(3, b_2(X)-3)$ such that $\HH^2(X, \Z)\simeq \Lambda_T$. Any fixed isometry $\eta\colon \HH^2(X, \Z)\to \Lambda_T$ is called a \emph{marking}, and we call the pair $(X, \eta)$ a $\Lambda_T$-marked pair. 
We say that two $\Lambda_T$-marked pairs $(X, \eta)$ and $(X', \eta')$ are \emph{equivalent} if there exists an isomorphism $f\colon X'\to X$ such that $\eta = \eta'\circ f^*$.%

There is a coarse moduli space $\mathcal{M}_{\Lambda_T}$ parametrizing equivalence classes of $\Lambda_T$-marked pairs of irreducible symplectic orbifolds. It is equipped with the \emph{period map}%
\[\mathscr{P}_{\Lambda_T}\colon \mathcal{M}_{\Lambda_T}\to \mathcal{D}_{\Lambda_T},\ [(X, \eta)]\mapsto \eta(\HH^{2,0}(X))\]%
where%
\[\mathcal{D}_{\Lambda_T} \coloneqq \left\{\mathbb{C}\omega\in \mathbb{P}(\Lambda_T\otimes\mathbb{C})\;\mid\; (\omega^2=0)\wedge (\omega.\overline{\omega}>0)\right\}\]%
is the so-called \emph{period domain}.

\begin{prop}[{{{\cite[Theorem 5.9]{Lol4}}}}]\label{surjectivity period map}%
    The period map $\mathscr{P}_{\Lambda_T}$ is surjective when restricted to any connected component of $\mathcal{M}_{\Lambda_T}$.
\end{prop}%

The moduli space $\mathcal{M}_{\Lambda_T}$ is neither connected nor Hausdorff. Two points $[(X, \eta)]$ and $[(X', \eta')]$ lie in the same connected component if and only if $\eta^{-1}\circ\eta'\colon \HH^2(X', \Z)\to \HH^2(X, \Z)$ is a parallel transport operator. If one of the latter holds, then $[(X, \eta)]$ and $[(X', \eta')]$ are non-separated if and only if $\mathscr{P}_{\Lambda_T}([(X, \eta)]) = \mathscr{P}_{\Lambda_T}([(X', \eta')])$.
An example of two non-separated orbifolds of Nikulin-type is provided in \cite[Section 3.3]{Ulrike0}.%
\begin{prop}[{{{\cite[Proposition 3.22]{Lol4}}}}]%
    Two non-separated distinct points in a given connected component of $\mathcal{M}_{\Lambda_T}$ determine birational irreducible symplectic orbifolds.%
\end{prop}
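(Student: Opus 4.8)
The plan is to adapt Huybrechts' argument for compact hyperk\"ahler manifolds (and its erratum) to the K\"ahler-orbifold setting: one realises the sought birational map $Y\dashrightarrow Y'$ as a limit, in a cycle space, of graphs of genuine isomorphisms between orbifolds lying simultaneously close to $[(Y,\eta)]$ and to $[(Y',\eta')]$.

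First I would produce the degenerating families and their graphs. Since $\mathscr{P}_{\Lambda_T}$ is a local isomorphism, $\mathcal{M}_{\Lambda_T}$ is locally modelled on the complex manifold $\mathcal{D}_{\Lambda_T}$, hence locally first countable, so non-separatedness yields a sequence $t_n\in\mathcal{M}_{\Lambda_T}$ with $t_n\to[(Y,\eta)]$ and $t_n\to[(Y',\eta')]$. Choosing charts $V\ni[(Y,\eta)]$, $V'\ni[(Y',\eta')]$ carrying the local (Kuranishi-type) families $\mathscr{Y}\to V$, $\mathscr{Y}'\to V'$ out of which $\mathcal{M}_{\Lambda_T}$ is glued, with $\mathscr{Y}_{[(Y,\eta)]}\cong Y$ and $\mathscr{Y}'_{[(Y',\eta')]}\cong Y'$, for $n\gg 0$ the fibres $Z_n:=\mathscr{Y}_{t_n}$ and $Z'_n:=\mathscr{Y}'_{t_n}$ both represent the marked pair $t_n$, so there is an isomorphism $f_n\colon Z_n\xrightarrow{\ \sim\ }Z'_n$ acting on $\HH^2(-,\Z)$ through the canonical identification given by the two markings; let $\Gamma_n\subset Z_n\times Z'_n$ be its graph. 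Using that $\mathscr{Y}\to V$ and $\mathscr{Y}'\to V'$ are $C^\infty$-locally trivial (Ehresmann for orbifolds), transport $\Gamma_n$ into the \emph{fixed} product $Y\times Y'$ via diffeomorphisms $Z_n\xrightarrow{\ \sim\ }Y$, $Z'_n\xrightarrow{\ \sim\ }Y'$; there the $\Gamma_n$ become cycles holomorphic for complex structures $J_n\to J$, with volumes bounded for a fixed metric (the classes of the fibrewise K\"ahler forms vary continuously with the base, and $f_n$ acts on $\HH^2$ through a fixed lattice isometry, so the $[\Gamma_n]$ are bounded). By Bishop/Fujiki compactness of cycles of bounded volume, a subsequence converges to a $J$-holomorphic effective cycle $\Gamma=\sum_i a_i\Gamma_i$ on $Y\times Y'$; since $\pr_{1*}[\Gamma_n]=[Y]$ for all $n$ and pushforward is continuous, $\pr_{1*}[\Gamma]=[Y]$, so exactly one component $\Gamma_0$ maps onto $Y$, with $\pr_1|_{\Gamma_0}$ of degree $1$.

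Next I would identify $\Gamma_0$ as the graph of a birational map. Choosing, via the markings, generators $\sigma_n\in H^{2,0}(Z_n)$, $\sigma'_n\in H^{2,0}(Z'_n)$ with $f_n^*\sigma'_n=\sigma_n$ and $\sigma_n\to\sigma_Y$, $\sigma'_n\to\sigma_{Y'}$, the $2$-form $\pr_1^*\sigma_n-\pr_2^*\sigma'_n$ vanishes on each $\Gamma_n$ (on the smooth locus), so in the limit $\pr_1^*\sigma_Y-\pr_2^*\sigma_{Y'}$ vanishes on $\Gamma_0$. Identifying $\Gamma_0$ birationally with $Y$ through $\pr_1$, the induced rational map $g\colon Y\dashrightarrow Y'$ satisfies $g^*\sigma_{Y'}=\sigma_Y$; nondegeneracy of $\sigma_Y$ on the smooth locus forces $dg$ to be generically injective, hence $g$ dominant and generically finite, and comparing $\int_Y\sigma_Y^{\,n}\wedge\overline{\sigma_Y}^{\,n}$ with $\int_{Y'}\sigma_{Y'}^{\,n}\wedge\overline{\sigma_{Y'}}^{\,n}$ (equal since the markings identify the periods and the Fujiki constants coincide, $2n=\dim Y$) forces $\deg g=1$. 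Hence $\Gamma_0$ is the graph of a birational isomorphism $Y\dashrightarrow Y'$, and $Y$ and $Y'$ are birational irreducible symplectic orbifolds.

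I expect the main obstacle to be the singular bookkeeping rather than the strategy: making the Bishop/Fujiki compactness and the volume bounds work for graphs inside families of \emph{singular} compact K\"ahler spaces; ruling out the limit cycle being absorbed into lower-dimensional loci (secured by $\pr_{1*}[\Gamma]=[Y]$); and making sense of the symplectic-form identities, and of the "generically finite implies birational" step, across the codimension $\geq 4$ singular set — which forces one to argue on smooth loci or on resolutions throughout.
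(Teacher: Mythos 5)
This statement is not proved in the paper at all: it is quoted verbatim from \cite[Proposition 3.22]{Lol4}, so there is no internal proof to compare against. Your proposal --- realising the birational map as a component of a Bishop/Barlet limit of graphs of isomorphisms between nearby fibres, with the volume bound coming from the fixed action on $\HH^2$ and the Fujiki relation, and degree one extracted from $g^*\sigma_{Y'}=\sigma_Y$ together with the equality of the integrals $\int\sigma^n\wedge\overline{\sigma}^n$ --- is precisely the Huybrechts-type argument that the cited reference adapts to the orbifold setting, so you have in essence reconstructed the proof of the source result rather than found a different route.
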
%

\subsubsection*{Twistor space}
For any positive three-space $W\subset \Lambda_T\otimes\R$, we define the associated \emph{twistor line} $T_W\subset \mathcal{D}_{\Lambda_T}$ by:
$$T_W\coloneqq\mathcal{D}_{\Lambda_T}\cap \mathbb{P}(W\otimes\C).$$
A twistor line is called \emph{generic} if $W^{\bot}\cap \Lambda_T=0$. 
\begin{thm}[\cite{Lol4}, Theorem 5.4]\label{Twistor}
  Let $(X,\eta)$ be a $\Lambda_T$-marked irreducible symplectic orbifold. Let $\alpha$ be a K\"ahler class on $X$, and $W_\alpha\coloneqq\Vect_{\R}(\eta(\alpha),$ $\eta(\Rea \sigma_X),\eta(\Ima \sigma_X))$. 
Then:
\begin{enumerate}[(i)]
\item
There exists a metric $g$ and three complex structures (see \cite[Section 5.1]{Lol4} for the definition) $I$, $J$ and $K$ in quaternionic relation on $X$ such that:
$$\alpha= \left[g(\cdot,I\cdot)\right]\ \text{and}\ g(\cdot,J\cdot)+ig(\cdot,K\cdot)\in \HH^{0,2}(X).$$
\item
There exists a deformation of $X$: 
$$\mathscr{X}\rightarrow T(\alpha)\simeq\mathbb{P}^1,$$ such that the period map
$\mathscr{P}\colon T(\alpha)\rightarrow T_{W_\alpha}$ provides an isomorphism. Moreover, for each $s=(a,b,c)\in \mathbb{P}^1$, the associated fiber $\mathscr{X}_s$ is an orbifold diffeomorphic to $X$ endowed with the complex structure $aI+bJ+cK$.
\end{enumerate}
\end{thm}
\begin{defi}\label{Twistordef}
 Such a deformation $\mathscr{X}\rightarrow T(\alpha)\simeq\mathbb{P}^1$ as in \Cref{Twistor} is called a \emph{twistor space}. When $W_\alpha$ is generic, we say that $\mathscr{X}\rightarrow T(\alpha)\simeq\mathbb{P}^1$ is a \emph{generic twistor space}.
\end{defi}

\subsubsection*{Automorphisms and birational automorphisms}

Let $X$ be an irreducible symplectic orbifold. We denote by $\Aut(X)\leq \Bir(X)$ the groups of automorphisms, respectively, birational automorphisms of $X$.%

\begin{rmk}%
    By abuse of terminology, even though $X$ is not necessarily projective, we talk about isomorphisms and birational isomorphisms instead of biholomorphic and bimeromorphic maps.%
\end{rmk}%

The image of the group homomorphism%
\[\rho_X\colon \Bir(X)\to \GL(\HH^2(X, \Z)),\, f\mapsto (f^*)^{-1}\]%
is contained in the subgroup of $\Mon^2(X)$ consisting of monodromies preserving the Hodge structure on $\HH^2(X, \C)$. It is known to be injective for $X$ of Nikulin-type \cite[Proposition 8.1]{Ulrike2}.%

\begin{nota}%
    We denote by $\Bir_s(X)$ the subgroup of birational automorphisms of $X$ whose action on $\HH^{2,0}(X)$ is trivial: We call such birational automorphisms \emph{symplectic}. Any other birational automorphism is said to be \emph{nonsymplectic}. We denote moreover by $\Aut_s(X)\leq \Bir_s(X)$ the subgroup consisting of symplectic automorphisms of $X$.%
\end{nota}%

\section{The monodromy group}\label{monodromysection}%
In this section, we show that the monodromy group of Nikulin-type orbifolds is maximal.%
\subsection{Reflections as monodromy operators}%
Let $X$ be an irreducible symplectic orbifold of Nikulin-type. Let $\Mon^2(X)\leq O^+(\HH^2(X, \mathbb{Z}))$ denote the monodromy group of $X$. 
Fix a marking $\eta\colon \HH^2(X, \mathbb{Z})\to \Lambda=U(2)^{\oplus3}\oplus E_8\oplus A_1^{\oplus2}$, and let%
\[\Mon^2(\Lambda, \eta) \coloneqq \eta\Mon^2(X)\eta^{-1}.\]%
Note that a priori $\Mon^2(\Lambda, \eta)\leq O^+(\Lambda)$ is not a normal subgroup. We show later in \Cref{thm: monodromy} that it actually is, and that the definition of $\Mon^2(\Lambda, \eta)$ does not depend on $\eta$.%

The following is a corollary of \cite[Theorem 6.15]{Ulrike2}: It is essential in order to prove \Cref{mainintro}. 

\begin{cor}\label{cor:reflections are monodromies}%
 Let $v\in \Lambda$ which verifies one of the two following properties:%

 \begin{enumerate}[(i)]%
  \item $v^2=-2$;
  \item  $v^2=-4$ and $\Div_\Lambda(v)=2$.%
  \end{enumerate}%
Let $R_v$ be the reflection $$R_v\colon\Lambda\rightarrow \Lambda,\ x\mapsto x-\frac{2v.x}{v^2}v.$$ Then $R_v\in \Mon^2(\Lambda, \eta)$.%
\end{cor}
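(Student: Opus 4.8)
The plan is to reduce, through the deformation invariance of the monodromy group and the surjectivity of the period map, to the case where $v$ is an algebraic class of type $(1,1)$ on one conveniently chosen orbifold of Nikulin-type, and there invoke \cite[Theorem 6.15]{Ulrike2}.

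I would first note that $\Mon^2(\Lambda,\eta)$ depends only on the connected component of $\mathcal M_\Lambda$ containing $(M',\eta)$. Indeed, if $(Y,\eta_Y)$ lies in that component, then $\psi:=\eta^{-1}\circ\eta_Y\colon \HH^2(Y,\Z)\to\HH^2(M',\Z)$ is a parallel-transport operator, conjugation by $\psi$ identifies $\Mon^2(Y)$ with $\Mon^2(M')$, and therefore $\eta_Y\,\Mon^2(Y)\,\eta_Y^{-1}=\eta\,\Mon^2(M')\,\eta^{-1}=\Mon^2(\Lambda,\eta)$. Hence it suffices, for each $v$ as in (i) or (ii), to produce \emph{one} marked orbifold $(Y,\eta_Y)$ of Nikulin-type in this component with $R_{w}\in\Mon^2(Y)$, where $w:=\eta_Y^{-1}(v)$.

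Both hypotheses force $v$ (hence $w$) to be primitive, since $\Lambda$ is even. By surjectivity of the period map I would then choose $(Y,\eta_Y)$ with period $[\omega]$ very general subject to $\omega\perp v$; for such a choice the algebraic lattice of $Y$ equals $\Z w$, a rank-one negative-definite sublattice of $\HH^2(Y,\Z)$. Now $w$ satisfies (i) or (ii) and is of type $(1,1)$ on $Y$, so \cite[Theorem 6.15]{Ulrike2} applies and yields $R_w\in\Mon^2(Y)$; combined with the previous paragraph this gives $R_v\in\Mon^2(\Lambda,\eta)$, and $v$ was arbitrary.

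\emph{The main obstacle} is to make sure \cite[Theorem 6.15]{Ulrike2} genuinely applies to $w$ on the orbifold $Y$ produced above. Concretely, one must control the wall-and-chamber structure of the movable cone of $Y$ --- equivalently, establish that $\pm w$ is effective and extremal (a prime exceptional divisor for $q(w)=-2$, and, via the model case of the exceptional divisor $\Sigma'$, also for $q(w)=-4$, $\Div_\Lambda(w)=2$) --- possibly after replacing $Y$ by a birational model, which changes neither $\Mon^2$ nor the parallel-transport identification with $\Lambda$. One must also check that conditions (i) and (ii) exhaust the relevant classes in $\Lambda=U(2)^{\oplus3}\oplus E_8\oplus A_1^{\oplus2}$. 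Finally, should \cite[Theorem 6.15]{Ulrike2} supply only part of these reflections directly, the rest follow formally from $R_{v_1}R_{v_2}R_{v_1}=R_{R_{v_1}(v_2)}$: the set of admissible $v$ with $R_v\in\Mon^2(\Lambda,\eta)$ is then stable under its own reflections, so a single geometric representative per orbit propagates to all of them.
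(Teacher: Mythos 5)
Your overall skeleton --- reduce to one representative per monodromy orbit and propagate via $R_{f(x)}=f\circ R_x\circ f^{-1}$ --- matches the paper's strategy, and your observation that $\Mon^2(\Lambda,\eta)$ depends only on the connected component is fine (as is the primitivity remark). But the core of the argument is missing. You reduce everything to the claim that, for a suitably generic marked orbifold $Y$ with $w=\eta_Y^{-1}(v)$ algebraic, ``\cite[Theorem 6.15]{Ulrike2} applies and yields $R_w\in\Mon^2(Y)$'', and you then flag the verification of this as ``the main obstacle''. That obstacle \emph{is} the content of the corollary. As it is used in the paper, \cite[Theorem 6.15]{Ulrike2} (together with Remark 6.14 there) is a classification statement: it decomposes the set of classes of square $-2$, or of square $-4$ and divisibility $2$, into at most six $\Mon^2$-orbits with explicit representatives. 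It does not assert that the reflections in these classes are monodromy operators, and no genericity or Picard-rank-one reduction will make it do so; in particular, for orbifolds one cannot simply invoke the Markman-style statement that reflections in prime exceptional divisors are monodromies --- that is precisely the ingredient that has to be supplied.

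The paper supplies it case by case for the six representatives: for the three of the form $r^*\pi_*(x)$ with $q_{S^{[2]}}(x)=-2$ it combines \cite[Theorem 9.1]{Markman} on $S^{[2]}$ with \cite[Proposition 3.9]{Ulrike2} to descend the monodromy to the Nikulin orbifold; for $\tfrac{\delta'+\Sigma'}{2}$ it quotes \cite[Remark 4.7]{Ulrike2}; and for the remaining two it uses \cite[Lemma 5.25]{Ulrike2} (which gives $R_{A'}\in\Mon^2$ and $R_{A'}(\Sigma')=B'$) together with $R_{\Sigma'}\in\Mon^2$ from \cite[Corollary 4.6]{Ulrike2} and the conjugation identity. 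None of these inputs appears in your proposal, so as written the proof has a genuine gap at its central step.
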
%
\begin{proof}%
First we recall the notation introduced in \cite{Ulrike2} and used in this proof.
Let $i\in \mathbb{N}$. 
We set $L^{(2)}_i$ a primitive element in $U(2)^{\oplus 3}\subset \Lambda$ of square $4i$. We also set $e_1^{(1)}$ and $e_2^{(1)}$ primitive elements in $E_8\subset \Lambda$ respectively of square $-2$ and $-4$. 
Let now $M'$ be a Nikulin orbifold as in \Cref{Niku}. We fix a marking $\eta$ and identify $\HH^2(M',\ZZ)$ with $\Lambda$ via $\eta$. 
We denote by $\left(\frac{\delta'+\Sigma'}{2},\frac{\delta'-\Sigma'}{2}\right)$ a basis of $A_1^{\oplus 2}\subset \Lambda$. 

Let $T$ be the set of elements in $\Lambda$ of square $-2$ or of square $-4$ and divisibility 2.  
According to \cite[Remark 6.14, Theorem 6.15]{Ulrike2}, the action of $\Mon^2(\Lambda, \eta)$ on $\Lambda$ decomposes $T$ into at most 6 orbits, whose representatives are given in the following table.%

\begin{center}%
{
\setlength{\tabcolsep}{7pt}%
\renewcommand%
\arraystretch{1.5}%
\rowcolors{1}{white}{lightgray!40!white}%
 \begin{tabular}{cccc|cccc}%
\hline%
     &$v$&$v^2$&$\text{div}_\Lambda(v)$&&$v$&$v^2$&$\text{div}_\Lambda(v)$\\%
    \hline%

    (1)&$L_{-1}^{(2)}$&$-4$&2&(4)&$\frac{\delta'+\Sigma'}{2}$&$-2$&2\\%

    (2)&$\delta'$&$-4$&2&(5)&$L_1^{(2)}+e_2^{(1)}-\frac{\delta'+\Sigma'}{2}$&$-2$&1\\%

    (3)&$2L_1^{(2)}+2e_2^{(1)}-\delta'$&$-4$&2&(6)&$e_1^{(1)}$&$-2$&1\\%

\hline%
\end{tabular}%
}
\end{center}%

Assume that $y=f(x)$ with $R_x\in \Mon^2(\Lambda, \eta)$ and $f\in \Mon^2(\Lambda, \eta)$. Then $R_y\in \Mon^2(\Lambda, \eta)$. Indeed, $R_y=f\circ R_x \circ f^{-1}$.%

Therefore, we only need to prove that the reflections through each element listed above is a monodromy operator. 
We can see each element in $\Lambda$ as an element in $\HH^2(M',\Z)$ with $M'$ obtained via $S^{[2]}$ for a K3 surface $S$. 
According to \Cref{BBform}, with suitable $L_{-1}, e_1 \in H^2(S^{[2]},\ZZ)$ we can write:%
\begin{itemize}%
 \item[(1)]%
 $L_{-1}^{(2)}=r^*(\pi_*(L_{-1}))$ with $q_{S^{[2]}}(L_{-1})=-2$,%
 \item[(2)] $\delta'=r^*(\pi_*(\delta))$ with $q_{S^{[2]}}(\delta)=-2$,%
 \item[(6)]%
 $e_1^{(1)}=r^*(\pi_*(e_1))$ with $q_{S^{[2]}}(e_1)=-2$.%
\end{itemize}%
By \cite[Theorem 9.1]{Markman}, we know that a reflection through an element of square $-2$ is a monodromy operator on $\HH^2(S^{[2]},\Z)$. By \cite[Proposition~3.9]{Ulrike2}, monodromy operators of $S^{[2]}$ commuting with the
Nikulin involution descend to monodromy operators on $M'$, and hence the reflections
through the elements in cases (1), (2) and (6) are monodromy operators.
%
According to \cite[Remark 4.7]{Ulrike2}, we also know in case (4) that $R_{\frac{\delta'+\Sigma'}{2}}\in \Mon^2(\Lambda, \eta)$.%

It remains to treat the cases (3) and (5). 
Let $A'\coloneqq L_1^{(2)}+e_2^{(1)}-\frac{\delta'+\Sigma'}{2}$ and $B'\coloneqq2L_1^{(2)}+2e_2^{(1)}-\delta'$. By \cite[Lemma 5.25]{Ulrike2}, we know that $R_{A'}\in\Mon^2(\Lambda, \eta)$ and
\begin{align*}
R_{A'}(\Sigma')&=\Sigma'-\frac{2\Sigma'. A'}{(A')^2}A'\\
&=\Sigma'-\frac{2\times 2}{-2}\left(L_1^{(2)}+e_2^{(1)}-\frac{\delta'+\Sigma'}{2}\right)\\
&=B'.
\end{align*}
However, we know that $R_{\Sigma'}\in \Mon^2(\Lambda, \eta)$ according to \cite[Corollary 4.6]{Ulrike2}. Hence $R_{B'}=R_{A'}\circ R_{\Sigma'}\circ R_{A'}\in \Mon^2(\Lambda, \eta)$.
This concludes the proof.%

\end{proof}%

\subsection{The orthogonal group of the discriminant form}%
Recall that we have fixed $\Lambda \coloneqq U(2)^{\oplus3} \oplus E_8 \oplus A_1^{\oplus 2}$. 
Its discriminant group $D_\Lambda$ is an $\FF_2$-vector space of dimension $8$ and comes equipped with the discriminant form.%

Let $b$ be a nondegenerate symplectic form on a finite dimensional $\FF_2$-vector space $V$. 
For $0\neq u \in V$ we call $T_u \in \mathrm{Sp}(b)$ defined by%
\[T_u(x) = x+b(u,x)u\]%
a \emph{symplectic transvection}. 
It is well known (see e.g. \cite[Theorem 2.1.9]{omeara}) that symplectic transvections generate $\mathrm{Sp}(b)$. 
For $q\colon V \to \Q/2\mathbb{Z}$ a torsion quadratic form, its \emph{polar form} is%
\[b\colon V \times V \to \Q/2\ZZ, \quad b(x,y)\coloneqq q(x+y)-q(x)-q(y),\]%
which, by the definition of a torsion quadratic form, is required to be bilinear. 
For $V$ an $\FF_2$-vector space the polar form actually takes values in $\ZZ/2\ZZ \cong \FF_2$.  It is alternating if and only if  $q(V)=\ZZ/2\ZZ$. For $W \leq V$ a submodule, we denote by $W^\perp$ its orthogonal with respect to the polar form $b$.%

For $u\in V$, a short calculation shows that the symplectic transvection $T_u$ preserves $q$, i.e., $\forall x \in V\colon q(x)=q(T_u(x))$, if and only if $q(u)=1+2\Z$. In this case $T_u$ is called a \emph{reflection}.%

The torsion quadratic form $q|_{D_\Lambda}$ takes values in $\frac{1}{2} \ZZ /2\ZZ$. Hence its polar $b$ is not alternating. 
\begin{lemme}\label{lem:OD_Lambda_generated_by_reflections}%
 The group $O(D_\Lambda)$ is generated by reflections.%
\end{lemme}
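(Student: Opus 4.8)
The plan is to reduce the statement to the structure of the quadratic form on $D_\Lambda$ and then invoke Witt-type generation results. First I would identify $D_\Lambda$ precisely: from $\Lambda = U(2)^{\oplus 3}\oplus E_8\oplus A_1^{\oplus 2}$ we get $D_\Lambda = D_{U(2)}^{\oplus 3}\oplus D_{A_1}^{\oplus 2}$, an $\FF_2$-vector space of dimension $8$ equipped with the discriminant quadratic form $q\colon D_\Lambda\to\Q/2\Z$. I would record that $q$ takes values in $\tfrac12\Z/2\Z$ on the $A_1$ summands (each $D_{A_1}$ contributing a generator of square $-\tfrac12$), so $q$ is \emph{not} $\ZZ/2\Z$-valued; its polar form $b$ is nevertheless the $\FF_2$-valued nondegenerate \emph{alternating} form on $D_\Lambda$ coming from the three $U(2)$-hyperbolic planes and the pairing between the two $A_1$'s. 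The key subtlety is that because $q$ is $\Q/2\Z$-valued rather than $\FF_2$-valued, the relevant notion is that of the orthogonal group $O(D_\Lambda)=O(q)$ sitting inside $\mathrm{Sp}(b)$, and "reflection" means a transvection $T_u$ with $q(u)=1+2\Z$ (equivalently $u^2\equiv 1\bmod 2\Z$), as defined just above the lemma.

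Next I would settle the generation statement by the appropriate version of the Witt / Wall generation theorem for finite quadratic forms over $\FF_2$. Concretely: decompose $(D_\Lambda,q)$ into an orthogonal sum of standard pieces — the $\FF_2$-forms usually denoted $u_1$ (hyperbolic, from $U(2)$) and $v_1$ (from $A_1^{\oplus 2}$ or its companion), in the notation of Nikulin's classification of $2$-adic discriminant forms. Then I would cite (or reprove in this small concrete case) the fact that $O(q)$ for such a $\Q/2\Z$-valued form on an $\FF_2$-space is generated by the reflections $T_u$, $q(u)=1+2\Z$; this is the analogue over $\FF_2$ of the classical theorem that an orthogonal group is generated by reflections, and in the $\FF_2$-setting it is precisely the statement that the reflections (as opposed to all transvections) already exhaust $O(q)$. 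A clean route is to exhibit enough vectors $u$ with $q(u)=1+2\Z$: in each $U(2)$-block the two isotropic generators $e,f$ have $q=0$, but $e+f$ has $q=1+2\Z$, and together with the $A_1$-generators (which have $q=\tfrac{-1}{2}$... here one must be careful) one builds an orbit of such vectors under the reflections that is large enough to generate. I would then run an induction on $\dim D_\Lambda$: pick a vector $w$ with $q(w)=1+2\Z$, show the subgroup generated by reflections acts transitively on the set of such $w$, and conclude by induction applied to $w^\perp$ with its restricted form, exactly as in the O'Meara-style proof cited for $\mathrm{Sp}(b)$.

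The main obstacle I anticipate is the bookkeeping around the distinction between $q$ being $\Q/2\Z$-valued and its polar form being $\FF_2$-valued: one must make sure the set $\{u : q(u)=1+2\Z\}$ is nonempty on every $q$-nondegenerate subspace that arises in the induction (it can fail for the "purely even" form $u_1$, which has no anisotropic vectors of the right type, so the induction step must be arranged to always keep a block contributing such a vector), and one must verify that transvections $T_u$ with $q(u)=1+2\Z$ genuinely preserve $q$ and not merely $b$ — this is the short computation flagged in the paragraph preceding the lemma, which I would simply invoke. An alternative, perhaps cleaner, obstacle-avoiding strategy is to appeal directly to a published generation theorem for $O(D_\Lambda)$ of indefinite lattices (e.g.\ via Nikulin's surjectivity $O(\Lambda)\twoheadrightarrow O(D_\Lambda)$ together with the fact that $O(\Lambda)$ is generated by reflections in $(-2)$- and $(-4)$-divisibility-$2$ vectors, whose images in $O(D_\Lambda)$ are exactly the reflections in the sense above), thereby transporting \Cref{cor:reflections are monodromies}'s reflection vectors down to $D_\Lambda$; I would likely present this shorter descent argument as the main proof and keep the Witt-style argument as the conceptual backup.
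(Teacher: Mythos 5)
Your primary route is circular within the logical structure of the paper. You propose to deduce the lemma from the surjectivity of $O(\Lambda)\to O(D_\Lambda)$ together with ``the fact that $O(\Lambda)$ is generated by reflections in $(-2)$- and $(-4)$-divisibility-$2$ vectors''; but that generation statement is exactly \Cref{prop:reflection subgroup}, whose proof \emph{uses} the present lemma (Kneser's theorem only yields that the smaller group $O^{\sharp,+}(\Lambda)$ is generated by $(-2)$-reflections, and one needs the lemma to pass from $O^{\sharp,+}$ to $O^{+}$). Moreover, the images downstairs are not ``exactly the reflections'': a reflection $R_v$ with $v^2=-2$ satisfies $R_v(x)-x=(v,x)v\in\Lambda$ for all $x\in\Lambda^\vee$, hence acts trivially on $D_\Lambda$; only the $(-4)$, divisibility-$2$ vectors induce reflections $T_{v/2+\Lambda}$. \Cref{cor:reflections are monodromies} asserts nothing about generation, so it cannot substitute for the missing input.

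Your backup Witt-style argument also has real gaps. First, the polar form $b$ is \emph{not} alternating on all of $D_\Lambda$: the two $A_1$-generators $a_i/2$ have $q(a_i/2)=-\tfrac12+2\Z$, hence $b(a_i/2,a_i/2)=2q(a_i/2)=1\neq 0$ (and the two $A_1$'s are orthogonal, so there is no ``pairing between them''). Second, there is no general citable theorem that $O(q)$ of a finite quadratic form on an $\FF_2$-space is generated by the transvections $T_u$ with $q(u)=1+2\Z$ --- that is precisely what must be proved here. Third, the proposed induction on $w^\perp$ collapses in characteristic $2$: for $q(w)=1+2\Z$ one has $b(w,w)=2q(w)=0+2\Z$, so $w\in w^\perp$ and $w^\perp$ is not a complement of $\langle w\rangle$; the dimension does not drop as in the classical Witt argument. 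The paper circumvents all of this differently: it restricts to the index-two subgroup $K=\{x\mid 2q(x)=0+2\Z\}$, on which $b$ \emph{is} alternating with one-dimensional radical $R=\langle r\rangle$, $q(r)=1+2\Z$; it lifts every symplectic transvection of $K/R$ to a genuine reflection $T_v$ (the unique $v\in u+R$ with $q(v)=1+2\Z$), so that by O'Meara the reflection subgroup $S$ surjects onto $\mathrm{Sp}(K/R)$; it then invokes the computation of Brandhorst--Veniani that $O(D_\Lambda)\to\mathrm{Sp}(K/R)$ is onto with kernel of order $2$, and exhibits the reflection $T_r$ (induced by $R_z$ with $z$ the sum of the two $A_1$-generators, $z^2=-4$, divisibility $2$) inside that kernel to conclude $S=O(D_\Lambda)$. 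If you want to salvage your approach, you would have to reproduce some version of this reduction to $\mathrm{Sp}(K/R)$ rather than argue on $(D_\Lambda,q)$ directly.
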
%
\begin{proof}%
For this proof, we let $x,y$ be generators of the two copies of $A_1$ in $\Lambda$. Consider the codimension one subspace%
\[K\coloneqq\{x \in D_\Lambda \mid 2q(x) = 0 + 2\Z\}\leq D_\Lambda.\]%
The space $K$ is the largest subspace where the bilinear form $b|_{K\times K}$ is alternating. It has a one dimensional radical $R = K \cap K^\perp$ generated by the single element $r\coloneqq \frac{x+y}{2}+\Lambda$, which satisfies $q(r)=1 +2\ZZ$. 
 The polar form $b$ thus descends to a nondegenerate alternating form on $K/R$, i.e. to a symplectic form.%

 Let $u\in K\setminus R$ and $T_{u+R} \in \mathrm{Sp}(K/R)$ be the corresponding symplectic transvection; note that $u+R =\{u,u+r\}$. Since $b(u,r)=0+2\ZZ$ and $q(r)=1+2\Z$, we have $q(u+r)=q(u)+1+2\ZZ$. Therefore, we find a unique $v \in u+R$ with $q(v)=1+2\Z$. 
 The reflection $T_v$ induces the symplectic transvection $T_{u+R} \in \mathrm{Sp}(K/R)$. 
 Let $S$ denote the subgroup of $O(D_\Lambda)$ generated by reflections. 
 Since $\mathrm{Sp}(K/R)$ is generated by symplectic transvections, the previous considerations show that the natural map $S \to \mathrm{Sp}(K/R)$ is surjective.%

 By \cite[Proposition 2.6]{brandhorst-veniani} and its proof we know that%
 \begin{equation}\label{eqn:ODtoSp}
     O(D_\Lambda) \to \mathrm{Sp}(K/R)
 \end{equation}%
 is surjective and its kernel has order $2$. 
  Hence, $[O(D_\Lambda):S] \in \{1,2\}$. 
 The element $z= x+y\in A_1^{\oplus2}$ has $z^2 =-4$ and divisibility $2$, and it satisfies $r = z/2+\Lambda$. The reflection $R_z$ in $z$ induces the reflection $T_r\in S$, which lies in the kernel of $O(D_\Lambda) \to \mathrm{Sp}(K/R)$ because $T_r(a)=a+b(r,a)r\equiv a \mod R$ for all $a\in K$.
 This proves that $O(D_\Lambda)=S$.%
\end{proof}%

\subsection{Generation of the monodromy group by reflections}%
Recall that $\Lambda \coloneqq U(2)^{\oplus3} \oplus E_8 \oplus A_1^{\oplus 2}$.
\begin{lemme}\label{lem:exists_lift_square_4}%
 Let $u \in D_\Lambda$ with $q(u)=1$. Then there exists $x \in \Lambda$ with $x^2=-4$, $\mathrm{div}_\Lambda(x)=2$ and $u = x/2+\Lambda$.%
\end{lemme}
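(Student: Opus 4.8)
The plan is to work explicitly inside $\Lambda = U(2)^{\oplus 3}\oplus E_8\oplus A_1^{\oplus 2}$, using the description of $D_\Lambda$ as an $\FF_2$-vector space of dimension $8$ together with its discriminant form, and to produce the desired $x$ by a normal-form argument on $u$. First I would recall that every element of $D_\Lambda$ of divisibility dividing $2$ is represented by $x/2$ for some $x\in\Lambda$ with $x.\Lambda\subseteq 2\ZZ$, and that $q_{D_\Lambda}(u) = x^2/4 \bmod 2\ZZ$; thus the condition $q(u)=1$ translates to $x^2\equiv 4 \pmod 8$, while $x.\Lambda=2\ZZ$ is exactly $\mathrm{div}_\Lambda(x)=2$. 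So the statement is: among the classes $x/2+\Lambda$ with $\mathrm{div}_\Lambda(x)=2$, those with $q_{D_\Lambda}=1$ admit a representative $x$ with $x^2=-4$ (rather than merely $x^2\equiv 4\bmod 8$).

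The key step is to reduce to a single orbit. The group $O^+(\Lambda)$ contains all the reflections $R_v$ produced in \Cref{cor:reflections are monodromies} (and in any case $O(D_\Lambda)$ is generated by reflections by \Cref{lem:OD_Lambda_generated_by_reflections}), and the natural map $O(\Lambda)\to O(D_\Lambda)$ is surjective since $\Lambda$ has enough $U(2)$ and $E_8$ summands (indefinite, and the $2$-adic symbol is large enough). Hence $O(\Lambda)$ acts transitively on the set of $u\in D_\Lambda$ with $q(u)=1$ — or at least I would check that there is a single such orbit by a direct count, identifying the isotropy type of the quadratic space $(D_\Lambda,q)$. Therefore it suffices to exhibit \emph{one} element $x_0\in\Lambda$ with $x_0^2=-4$, $\mathrm{div}_\Lambda(x_0)=2$, and $q_{D_\Lambda}(x_0/2+\Lambda)=1$: applying a suitable $g\in O(\Lambda)$ then transports $x_0$ to an $x$ representing any prescribed such $u$, and $g$ preserves the square and the divisibility.

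For the explicit witness I would take $x_0 = x+y$ where $x,y$ are the standard generators of the two $A_1$ summands of $\Lambda$ (each of square $-2$): then $x_0^2 = -4$, $x_0$ is not divisible by $2$ in $\Lambda$ but $x_0.\Lambda = 2\ZZ$ because $x_0.x = x_0.y = -2$ and $x_0$ is orthogonal to everything else, so $\mathrm{div}_\Lambda(x_0)=2$; and $x_0/2+\Lambda = \tfrac{x}{2}+\tfrac{y}{2}+\Lambda$ has $q_{D_\Lambda} = x_0^2/4 = -1 = 1 \in \Q/2\ZZ$. This is exactly the element $r$ already used at the end of the proof of \Cref{lem:OD_Lambda_generated_by_reflections}, which is reassuring. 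Combined with the transitivity of $O(\Lambda)$ on $\{u : q(u)=1\}$, this finishes the argument.

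The main obstacle is the transitivity claim: I need to be sure that $O(\Lambda)\twoheadrightarrow O(D_\Lambda)$ (so that an isometry of $D_\Lambda$ moving $r$ to the prescribed $u$ lifts) and that $O(D_\Lambda)$ — or its subgroup generated by reflections, which by \Cref{lem:OD_Lambda_generated_by_reflections} is all of it — acts transitively on the length-$1$ vectors of the $\FF_2$-quadratic space $D_\Lambda$. The surjectivity of $O(\Lambda)\to O(D_\Lambda)$ follows from Nikulin's criterion since $\Lambda$ is indefinite with $E_8$ (hence $U$ after adding the $U(2)$'s rationally is irrelevant; the relevant point is the $2$-adic local structure admits enough automorphisms) — concretely the $A_1^{\oplus 2}$ summand already surjects onto the "reflection" part and the $U(2)^{\oplus 3}\oplus E_8$ part surjects onto the symplectic quotient $\mathrm{Sp}(K/R)$ as in \Cref{lem:OD_Lambda_generated_by_reflections}. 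Transitivity on vectors of a fixed nonzero value in a nondegenerate $\FF_2$-quadratic space is classical (Witt's theorem for the form $q_{D_\Lambda}$ restricted to the relevant coset structure), so once the lifting is in place the proof is immediate; I would spell out the Witt-type argument only to the extent needed to handle the degenerate radical direction $R$.
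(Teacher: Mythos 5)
There is a genuine gap: the transitivity claim at the heart of your argument is false. The subspace $K=\{x\in D_\Lambda \mid 2q(x)=0\}$ and its radical $R=K\cap K^\perp=\{0,r\}$ are canonically defined, so \emph{every} isometry of $D_\Lambda$ fixes $r$. Since $q(r)=1+2\ZZ$, the set $\Gamma=\{u\in D_\Lambda\mid q(u)=1+2\ZZ\}$ contains the fixed point $r$ together with many other elements (e.g.\ the class of $w/2$ for $w$ of square $-4$ in a $U(2)$ summand), so $O(D_\Lambda)$ has at least two orbits on $\Gamma$ — in fact exactly two, $\{r\}$ and $\Gamma\setminus\{r\}$, which is what the paper establishes via the surjection $O(D_\Lambda)\to\mathrm{Sp}(K/R)$. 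Your "direct count / Witt's theorem" fallback would not rescue this, because $(D_\Lambda,q)$ is not an $\FF_2$-valued nondegenerate quadratic space: $q$ takes the value $3/2$ on the $A_1$ classes, which is precisely why $K$ is a proper subspace and $r$ is distinguished.

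The failure is compounded by your choice of witness: $x_0=x+y$ with $x,y$ the $A_1$ generators satisfies $x_0/2+\Lambda=r$, the unique fixed point. Hence for every $g\in O(\Lambda)$ the element $g(x_0)$ still reduces to $r$ in $D_\Lambda$, and your construction produces lifts of $u=r$ only; the remaining $63$ elements of $\Gamma$ are not reached. The repair is exactly what the paper does: keep your witness $z=x+y$ for the orbit $\{r\}$, add a second witness $w$ of square $-4$ and divisibility $2$ inside one of the $U(2)$ summands (whose class lies in $\Gamma\setminus\{r\}$), prove that $O(D_\Lambda)$ acts transitively on $\Gamma\setminus\{r\}$ by lifting the two-orbit statement for $\mathrm{Sp}(K/R)$ and ruling out $u-f(v)=r$ via $q(r)=1+2\ZZ$, and then use Nikulin's surjectivity of $O(\Lambda)\to O(D_\Lambda)$ so that $(w/2)O(\Lambda)\cup(z/2)O(\Lambda)$ covers all of $\Gamma$. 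Apart from this, your translation of the hypotheses ($q(u)=1$ means $x^2\equiv 4\bmod 8$, divisibility $2$ means $x/2\in\Lambda^\vee$) and the overall strategy match the paper's.
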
%
\begin{proof}%
 It is well known (see e.g. \cite[2.1.3]{omeara}) that the symplectic group $\mathrm{Sp}(K/R)$ has two orbits on 
 $K/R$, one represented by $0+R$ and the other one by $u+R$ with $u \not\in R$. Recall from \cref{eqn:ODtoSp} that $O(D_\Lambda) \to \mathrm{Sp}(K/R)$ is surjective. 
 Hence for $u,v \in K\setminus R$ with $q(u)=q(v)=1+2\ZZ$ we find $f \in O(D_\Lambda)$ with $u-f(v) \in R=\{0,r\}$. Suppose that $u-f(v)=r$. With the fact that $r \in K^\perp$, we arrive at the contradiction%
 \[1+2\ZZ=q(u)=q(f(v)+r)=q(f(v))+q(r)=q(v)+q(r) = 0+2\ZZ.\]%
 Thus $u=f(v)$. 
 This shows that the action of $O(D_\Lambda)$ has two orbits on the set%
 \[\Gamma = \{v \in D_\Lambda \mid q(v)=1+2\ZZ\}.\]%
 The first orbit consists of one element, the generator $r$ of $R$. 
 The second orbit is represented by any element of $\Gamma \setminus \{r\}$.%

 Let $w\in \Lambda$ of square $-4$ be in one of the copies of $U(2)$. 
 Let $x,y$ generate the two copies of $A_1$ in the definition of $\Lambda$. Then $z=x+y$ is of square $-4$ and divisibility $2$. Further $r = z/2+\Lambda$. 
 By \cite[Theorem 1.14.2]{nikulin} we know that $O(\Lambda) \to O(D_\Lambda)$ is surjective. Therefore%
 \[(w/2) O(\Lambda) \cup (z/2) O(\Lambda)\]%
 surjects onto $\Gamma$.%
\end{proof}%

For the proof of the next lemma we need to introduce some terminology on spinor norms.
Let $(L,q)$ be an integral lattice and $V \coloneqq L\otimes \mathbb{Q}$ be the corresponding quadratic space. The \emph{rational spinor norm} 
\[\theta \colon O(V) \to \QQ^\times /(\mathbb{Q}^\times)^2\] 
is the group homomorphism which for any anisotropic $v\in V$ has $\theta(R_v)=q(v,v)/2$, where $R_v$ is the reflection defined by $v$. Since $O(V)$ is generated by reflections, this uniquely characterizes $\theta$. 
Note that the reflection in a $(2)$-vector $v$ has spinor norm $1=q(v,v)/2$. 

Its $p$-adic variant is denoted by
\[\theta_p \colon O(V\otimes \QQ_p) \to \QQ_p^\times /(\mathbb{Q}_p^\times)^2.\]
We denote by $\Theta(V) = \ker \theta$ and $S\Theta(V) = \Theta(V) \cap SO(V)$.
Set $\Theta(L) = O(L) \cap \Theta(V)$ and $S\Theta(L)=S\Theta(V) \cap O(L)$. 
In the notation of O'Meara \cite[\S43]{om73} and Kneser \cite{Kneser:generation_by_reflection}, whose results we use for the next lemma, 
we have $O'(L) = S\Theta(L)$ (they define $O^+(L)$ as the special orthogonal group $SO(L)$, we do not). 

Recall that the Witt index of a quadratic space is the dimension of a maximal totally isotropic subspace.

Since the reflection in a $(-2)$-root acts trivially on the discriminant group,
the following lemma has a chance to be true.
\begin{lemme}\label{lem:stable_gen_by_reflections}
The group $O^{\sharp,+}(\Lambda)$ is generated by reflections in $(-2)$-roots. 
\end{lemme}

\begin{proof}
We work with the lattice $L\coloneqq \Lambda(-1)$ and look at reflections in $(2)$-roots.
In temporary notation let $R(L) \subseteq O(L)$ denote the subgroup they generate. Then $SR(L)=R(L)\cap SO(L)$ is the subgroup of elements that can be written as a product of an even number of reflections. 

The lattice $L$ is even, $L \otimes \R$ has Witt index $3\geq 2$ and $L$ contains a sublattice isomorphic to $E_8(-1)$. In particular, $\det(E_8)=1$ is divisible by neither $2$ or $3$ and its rank is $8 \geq 5,6$. Hence, Kneser's result \cite[Satz 4]{Kneser:generation_by_reflection} applies and gives that $S\Theta^\sharp(L)  = SR(L)$. Since the reflection in a $(2)$-vector has determinant $-1$, it generates $\Theta^\sharp(L)/ S\Theta^\sharp(L)$ and we have \[\Theta^\sharp(L)=R(L).\]
It remains to show that 
\[O^{\sharp,-}(L) = \Theta^\sharp(L).\]
The inclusion $\supseteq$ holds by the definitions. For the other inclusion $\subseteq$ we have to see that $O^{\sharp,-}(L)$ lies in the kernel of the rational spinor norm $\theta$.

By \cite[Satz 6]{Kneser:generation_by_reflection}, for all primes $p$ we have an inclusion: $\theta_p(O^{\sharp}(L_p)) \subseteq \ZZ_p^\times/(\ZZ_p^{\times})^2$. As a consequence, any isometry 
$f \in O^\sharp(L)$ satisfies $\theta(f) \in \ZZ_p^\times/(\ZZ_p^{\times})^2$ for all primes $p$. In particular $\theta(f)$ is represented by an integer which is not divisible by any prime number, i.e. $\theta(f)=\pm1  (\mathbb{Q}^\times)^2$. If $f$ is moreover in $O^{\sharp,-}(L)$, then the real spinor norm of $f$ is positive and therefore $\theta(f)=-1(\mathbb{Q}^\times)^2$ is impossible.
 \end{proof}
\begin{prop}\label{prop:reflection subgroup}%
 Set%
\[\Delta = \{x \in \Lambda \mid x^2 =-2 \vee (x^2=-4 \wedge \mathrm{div}_\Lambda(x) =2)\}.\]%
Denote by $W(\Delta)$ be the subgroup of $O(\Lambda)$ generated by reflections in $\Delta$. Then $O^+(\Lambda) = W(\Delta)$.%
\end{prop}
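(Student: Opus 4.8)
The plan is to prove the two inclusions separately. The inclusion $W(\Delta)\subseteq O^+(\Lambda)$ is immediate: for $v\in\Delta$ we have $-q_\Lambda(v)=-v^2\in\{2,4\}$, which is positive, so the real spinor norm of $R_v$ with respect to $-q_\Lambda$ is a square, hence trivial, and $R_v\in O^+(\Lambda)$. All the content lies in the reverse inclusion $O^+(\Lambda)\subseteq W(\Delta)$, which I would attack in two steps.

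\emph{Reduction modulo the discriminant form.} Let $\pi\colon O(\Lambda)\to O(D_\Lambda)$ be the natural map and set $O^{\sharp,+}(\Lambda)=\ker\pi\cap O^+(\Lambda)$. For every $u\in D_\Lambda$ with $q(u)=1$, \Cref{lem:exists_lift_square_4} produces $x\in\Lambda$ with $x^2=-4$, $\Div_\Lambda(x)=2$ and $u=x/2+\Lambda$; a direct computation shows that $\pi(R_x)$ is the reflection $T_u\in O(D_\Lambda)$. Since by \Cref{lem:OD_Lambda_generated_by_reflections} these $T_u$ generate $O(D_\Lambda)$, and each such $x$ lies in $\Delta$, we obtain $\pi(W(\Delta))=O(D_\Lambda)$. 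In particular $\pi|_{O^+(\Lambda)}$ is surjective, so $O^+(\Lambda)=W(\Delta)\cdot O^{\sharp,+}(\Lambda)$ and it remains to prove $O^{\sharp,+}(\Lambda)\subseteq W(\Delta)$.

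\emph{The stable orthogonal group.} Here I would use Eichler transvections. Fix an isotropic generator $e$ of one of the $U(2)$-summands of $\Lambda$; then $e$ is isotropic and $\Div_\Lambda(e)=2$. For $a\in e^\perp$ let $E_{e,a}\colon x\mapsto x+(x.e)\,a-(x.a)\,e-\tfrac12 a^2(x.e)\,e$ be the associated Eichler transvection. Because $\Div_\Lambda(e)=2$ and $\Lambda$ is even, $E_{e,a}\in O^{\sharp,+}(\Lambda)$ for all $a\in e^\perp$, and $a\mapsto E_{e,a}$ is a homomorphism with $E_{e,a+b}=E_{e,a}E_{e,b}$. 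Next, $e^\perp=\ZZ e\oplus U(2)^{\oplus2}\oplus E_8\oplus A_1^{\oplus2}$ is generated by the $(-2)$-vectors it contains: the roots of the $E_8$-summand and the generators of the $A_1$-summands are such, and each remaining generator $a'$ of $e^\perp$ (the isotropic generators of the other $U(2)$-summands, as well as $e$ itself) can be written as $(\alpha+a')-\alpha$ for a root $\alpha$ of $E_8$, with $\alpha+a'$ again of square $-2$. Hence every $E_{e,a}$ is a product of transvections $E_{e,\gamma}$ with $\gamma^2=-2$ and $\gamma\perp e$. Finally, a short computation gives $E_{e,\gamma}=R_{\gamma+e}R_{\gamma}$; since $\gamma^2=(\gamma+e)^2=-2$, both $\gamma$ and $\gamma+e$ lie in $\Delta$, so $E_{e,\gamma}\in W(\Delta)$, and therefore every Eichler transvection attached to an isotropic generator of a $U(2)$-summand lies in $W(\Delta)$.

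To conclude we need that these Eichler transvections generate $O^{\sharp,+}(\Lambda)$, and this is the point at which I expect the real work to lie. Eichler's classical generation theorem recovers $O^{\sharp,+}(L)$ from Eichler transvections for lattices containing $U\oplus U$ (or a unimodular summand), whereas $\Lambda$ only contains copies of $U(2)$ and all the available isotropic vectors have divisibility $2$ rather than $1$. One would therefore either adapt Eichler's inductive argument to a lattice with a $U(2)$-summand, keeping careful track of divisibilities, or first exhibit a convenient isometry of $\Lambda$ (using that $\Lambda$ is $2$-elementary and indefinite of large rank, possibly reducing to a lattice with a genuine hyperbolic-plane summand) so that the classical statement applies directly. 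With such a generation result in hand, the previous step gives $O^{\sharp,+}(\Lambda)\subseteq W(\Delta)$, and combined with $\pi(W(\Delta))=O(D_\Lambda)$ this yields $O^+(\Lambda)=W(\Delta)$.
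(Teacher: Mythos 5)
Your first half is correct and coincides with the paper's argument: the spinor-norm check gives $W(\Delta)\subseteq O^+(\Lambda)$, and combining \Cref{lem:OD_Lambda_generated_by_reflections} with \Cref{lem:exists_lift_square_4} shows that $W(\Delta)$ surjects onto $O(D_\Lambda)$, reducing everything to the inclusion $O^{\sharp,+}(\Lambda)\subseteq W(\Delta)$. Your computations in the Eichler-transvection step are also correct (the identity $E_{e,\gamma}=R_{\gamma+e}R_{\gamma}$ checks out, as does the generation of $e^\perp$ by $(-2)$-vectors), so you do exhibit a large supply of elements of $O^{\sharp,+}(\Lambda)$ lying in $W(\Delta)$.

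However, the proof is not complete: the claim that these transvections generate $O^{\sharp,+}(\Lambda)$ is exactly the substance of the proposition, and you leave it as ``the real work,'' sketching two possible strategies without carrying either out. As you observe, Eichler's criterion does not apply verbatim because your isotropic vectors have divisibility $2$; and although $\Lambda$ does abstractly split off $U\oplus U$ (by Nikulin's splitting criterion, since the minimal number of generators of $D_\Lambda$ is $8\leq \rk\Lambda-2$, applied twice), after such a change of coordinates you would still need to re-prove that the resulting divisibility-one Eichler transvections factor into reflections from $\Delta$, which is not automatic. The paper closes this gap by a different and more direct route: it invokes Kneser's generation theorem \cite[Satz 4]{Kneser:generation_by_reflection}, whose hypotheses hold because $\Lambda$ is even, $\Lambda\otimes\RR$ has Witt index $3\geq 2$, and $\Lambda$ contains the unimodular sublattice $E_8$ of rank $8$ (providing sublattices of rank $\geq 5$, resp.\ $\geq 6$, of determinant prime to $2$, resp.\ $3$). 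Kneser's theorem then states outright that $O^{\sharp,+}(\Lambda)$ is generated by reflections in $(-2)$-vectors, all of which lie in $\Delta$. Without this, or an equally strong generation result, your argument does not conclude.
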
%
\begin{proof}
 By \Cref{lem:stable_gen_by_reflections} we have%
 \begin{equation}\label{eqn:W}
  O^{\sharp,+}(\Lambda) \subseteq W(\Delta) \subseteq O^+(\Lambda).
 \end{equation} 
 Let $x \in \Lambda$ with $x^2=-4$ and $\mathrm{div}_\Lambda(x)=2$. This means that $y=x/2 \in \Lambda^\vee$ and has $y^2=-1$. The reflection $R_x \in W(\Delta)$ induces the reflection in $T_{y+\Lambda} \in O(D_\Lambda)$. 
 By \Cref{lem:OD_Lambda_generated_by_reflections} and \Cref{lem:exists_lift_square_4} this implies that $W(\Delta)$ surjects onto $O(D_\Lambda)$ and so does the potentially larger group $O^{+}(\Lambda)$. The kernel of 
 $O^+(\Lambda) \to O(D_\Lambda)$ is by definition $O^{\sharp,+}(\Lambda)$.
 With \cref{eqn:W} and the homomorphism theorem we obtain that
 \[W(\Delta)/O^{\sharp,+}(\Lambda) \cong O(D_\Lambda) \cong O^+(\Lambda)/O^{\sharp,+}(\Lambda).\] 
 This proves the claim $W(\Delta)=O^+(\Lambda)$.%
\end{proof}%

\begin{thm}\label{thm: monodromy}%
 $\Mon^2(\Lambda, \eta) = O^+(\Lambda)$.%
\end{thm}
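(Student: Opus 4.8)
The plan is to sandwich $\Mon^2(\Lambda,\eta)$ between two groups that we already know coincide. The upper bound is essentially free: by general theory (recalled in the excerpt just before \Cref{thm: monodromy}) we have $\Mon^2(\Lambda,\eta)\leq O^+(\Lambda)$, so the whole content is the reverse inclusion $O^+(\Lambda)\subseteq \Mon^2(\Lambda,\eta)$.

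For the lower bound I would invoke \Cref{prop:reflection subgroup}, which identifies $O^+(\Lambda)=W(\Delta)$, the subgroup generated by reflections $R_v$ in vectors $v\in\Delta$, i.e. vectors with $v^2=-2$, or with $v^2=-4$ and divisibility $2$. But these are exactly the two types of vectors covered by \Cref{cor:reflections are monodromies}, which tells us that for each such $v$ the reflection $R_v$ lies in $\Mon^2(\Lambda,\eta)$. Hence every generator of $W(\Delta)=O^+(\Lambda)$ is a monodromy operator, and since $\Mon^2(\Lambda,\eta)$ is a group it contains $W(\Delta)=O^+(\Lambda)$.

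Combining the two inclusions gives $\Mon^2(\Lambda,\eta)=O^+(\Lambda)$, which is the claim; transporting back through the marking $\eta$ also yields \Cref{mainintro} and shows that $\Mon^2(\Lambda,\eta)$ is independent of $\eta$ and normal in $O^+(\Lambda)$, as promised in the text. There is essentially no obstacle left at this stage: all the difficulty has been front-loaded into \Cref{cor:reflections are monodromies} (which reflections are monodromies, using the orbit analysis from \cite{Ulrike2}) and into \Cref{prop:reflection subgroup} (that $O^+(\Lambda)$ is generated by precisely those reflections, via Kneser's theorem together with the discriminant-form computations of the preceding lemmas). The only point to be careful about is bookkeeping: one must check that the set $\Delta$ appearing in \Cref{prop:reflection subgroup} matches, vector-class by vector-class, the hypotheses (i) and (ii) of \Cref{cor:reflections are monodromies} — which it does — so that the generating reflections of $O^+(\Lambda)$ are literally the ones proved to be monodromies.
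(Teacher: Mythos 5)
Your argument is exactly the paper's proof: the sandwich $W(\Delta)\subseteq \Mon^2(\Lambda,\eta)\subseteq O^+(\Lambda)$ from \Cref{cor:reflections are monodromies}, combined with the identity $W(\Delta)=O^+(\Lambda)$ from \Cref{prop:reflection subgroup}. The bookkeeping point you raise (that $\Delta$ matches the hypotheses of the corollary) is indeed the only thing to verify, and it holds, so the proposal is correct.
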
%
\begin{proof}%
By \Cref{cor:reflections are monodromies} $W(\Delta) \subseteq \Mon^2(\Lambda, \eta) \subseteq O^+(\Lambda)$. 
By \Cref{prop:reflection subgroup} $W(\Delta)=O^+(\Lambda)$.%
\end{proof}%
It follows from \Cref{thm: monodromy}, that $\Mon^2(\Lambda,\eta)$ is independent of the marking $\eta$. 
We may therefore write $\Mon^2(\Lambda)\coloneqq\Mon^2(\Lambda,\eta)$.%

\section{Classification of symplectic birational automorphisms}\label{classificationsection}%
In the previous section, we have determined the monodromy group for Nikulin-type orbifolds. From this, and thanks to the global Torelli theorem for irreducible symplectic orbifolds, we can now classify symplectic birational automorphisms of finite order for such orbifolds.%

\subsection{Torelli setting}%
For an orbifold $X$ of Nikulin-type, we recall that the map%
\[\rho_X\colon\textnormal{Bir}(X)\to O(\HH^2(X, \mathbb{Z})),\; f\mapsto (f^*)^{-1}\]%
is injective by \cite[Proposition 8.1]{Ulrike2}. Its image is described by the global Torelli theorem for irreducible symplectic orbifolds \cite[Theorem 1.1]{Lol4} in terms of the Hodge structure, the wall divisors and the monodromy group, which we calculated in \Cref{thm: monodromy}.%

We call \emph{effective} any isometry, or group of isometries, of $\Lambda$ contained in $\eta\text{im}(\rho_X)\eta^{-1}$ where $(X, \eta)$ is any $\Lambda$-marked pair. 
We call an effective isometry \emph{symplectic} (resp. \emph{regular symplectic}) if it arises by representing symplectic birational automorphisms (resp. symplectic regular automorphisms) on $\Lambda$. One can classify birational automorphisms on Nikulin-type orbifolds by studying effective isometries of the lattice $\Lambda$. Similarly to \cite[Theorem 1.2]{Ulrike2}, we define the respective sets of numerical \emph{prime exceptional divisors} and numerical \emph{wall divisors} for orbifolds of Nikulin-type to be respectively%
\begin{align*}%
    &\mathcal{W}(\Lambda)^{pex} \coloneqq \left\{x \in \Lambda \mid (x^2 =-2\wedge \mathrm{div}_\Lambda(x)=1) \vee (x^2=-4 \wedge \mathrm{div}_\Lambda(x)=2)\right\}\text{ and}\\%
    &\mathcal{W}(\Lambda) \coloneqq \mathcal{W}(\Lambda)^{pex}\cup \left\{x\in\Lambda\;\mid\; x^2=-6 \wedge \mathrm{div}_\Lambda(x)=2\right\}\\%
    &\phantom{\mathcal{W}(\Lambda) \coloneqq \mathcal{W}(\Lambda)^{pex}}\cup \left\{x\in\Lambda\;\mid\; x^2=-12 \wedge \mathrm{div}_\Lambda(x)=2\wedge x_{U(2)^{3}}\in2U(2)^{3}\right\},%
\end{align*}%
where $x_{U(2)^{3}}$ denotes the image of $x$ under the projection $\Lambda\to U(2)^3$ 
\cite[Remark 1.3]{Ulrike2}. 

\begin{rmk}
    We comment on the different types of (numerical) wall divisors just introduced.
\begin{itemize}
\item  An example of prime exceptional divisor of square $-2$ and divisibility 1 is provided in \cite[Proposition 5.22]{Ulrike2}.
\item An example of prime exceptional divisor of square $-4$ and divisibility 2 is given by $\Sigma'$ in \Cref{Niku}.
\item  An example of a contractible extremal curve dual to a class of square $-6$ and divisibility 2 is described in \cite[Lemma 5.14, Proposition 5.15]{Ulrike2}.
\item  An example of a contractible extremal curve dual to a class of square $-12$ and divisibility 2 is described in \cite[Lemma 5.21, Proposition 5.22]{Ulrike2}.
\end{itemize}
Note that a Beauville--Bogomolov class corresponding to a prime exceptional divisor remains exceptional on any deformation where it stays of type (1,1). Note also that the classes of square $-6$ or $-12$ and with divisibility 2 cannot be prime exceptional because the real reflections they define are not integral (see \cite[Theorem 3.10]{Lehn}).
\end{rmk}

The following result is standard.

\begin{lemme}\label{lemme symplectic}%
    A finite order isometry $f\in O^+(\Lambda)$ is symplectic (resp. regular symplectic) effective if and only if $\Lambda_f$ is negative definite and%
    \[\Lambda_f\cap\mathcal{W}^{pex}(\Lambda)=\emptyset \quad (\textnormal{resp. }\Lambda_f\cap\mathcal{W}(\Lambda)=\emptyset).\]%
\end{lemme}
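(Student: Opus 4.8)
The plan is to transfer the numerical criterion to an actual orbifold by means of the global Torelli theorem for irreducible symplectic orbifolds (\cite[Theorem 1.1]{Lol4}), using \Cref{thm: monodromy} to identify $\Mon^2(\Lambda)$ with $O^+(\Lambda)$; the argument runs parallel to \cite[\S2.5 \& \S2.6]{gov23}. I treat the regular and the birational case simultaneously, writing $\mathcal{C}$ for the relevant open chamber of the positive cone of $H^{1,1}(Y,\R)$ --- the Kähler cone $\mathcal{K}(Y)$ in the automorphism case, the interior of the movable cone $\overline{\mathcal{MV}}(Y)$ in the birational one --- and $\mathcal{W}^{\bullet}$ for $\mathcal{W}(\Lambda)$, resp. $\mathcal{W}(\Lambda)^{pex}$.

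For the ``if'' direction, let $f\in O^+(\Lambda)$ be of finite order with $\Lambda_f$ negative definite and $\Lambda_f\cap\mathcal{W}^{\bullet}=\emptyset$. Since $\Lambda$ has signature $(3,13)$, negative definiteness of $\Lambda_f$ forces $\Lambda^f$ to have signature $(3,\rk\Lambda^f-3)$, so $\mathbb{P}(\Lambda^f\otimes\C)\cap\mathcal{D}_\Lambda$ is a nonempty complex manifold of positive dimension. Choosing $\omega$ very general therein, the only elements of $\Lambda$ orthogonal to $\omega$ are those of $\Lambda_f$ (note $\Lambda_f=(\Lambda^f)^\perp$ is primitive); by surjectivity of the period map there is a marked Nikulin-type orbifold $(Y,\eta)$ with period $\omega$, and then $\eta^{-1}(\Lambda_f)=\mathrm{NS}(Y)$. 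Transported via $\eta$, $f$ becomes a Hodge isometry (it fixes $\omega$), it lies in $O^+(\Lambda)=\Mon^2(\Lambda)$, and since $\mathrm{NS}(Y)$ meets neither $\mathcal{W}(\Lambda)^{pex}$ nor, in the automorphism case, $\mathcal{W}(\Lambda)$, the chamber $\mathcal{C}$ is an entire connected component of the positive cone, which $f$ preserves because it fixes the plane spanned by $\mathrm{Re}\,\omega,\mathrm{Im}\,\omega$ and is orientation preserving. The global Torelli theorem then yields a (birational) automorphism $g$ of $Y$ with $\rho_Y(g)=f$; it is symplectic because $f$ fixes $\omega$, and of finite order because $\rho_Y$ is injective.

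Conversely, let $g$ be a finite order symplectic (birational) automorphism of a Nikulin-type orbifold $Y$ with $\rho_Y(g)=f=(g^*)^{-1}$, and fix a marking. Since $g$ is symplectic, $f$ fixes the period $\omega$, so $\omega\in\Lambda^f\otimes\C$, the plane $\langle\mathrm{Re}\,\omega,\mathrm{Im}\,\omega\rangle\subseteq\Lambda^f\otimes\R$ is positive definite, and $\Lambda_f\subseteq\omega^\perp\cap\Lambda=\mathrm{NS}(Y)$. Since $g$ is a (birational) automorphism, $f$ preserves the nonempty open convex cone $\mathcal{C}$; averaging over $\langle f\rangle$ produces an $f$-fixed class $a\in\mathcal{C}$ with $a^2>0$, linearly independent from $\mathrm{Re}\,\omega,\mathrm{Im}\,\omega$, so $\Lambda^f\otimes\R$ carries a positive $3$-space and hence $\Lambda_f$ is negative definite. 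If some $0\neq w\in\Lambda_f$ lay in $\mathcal{W}^{\bullet}$, then, as $w\in\Lambda_f\otimes\Q=(\Lambda^f\otimes\Q)^\perp$ is a sum of non-trivial $f$-eigenvectors, the orbit sum $\sum_k f^k(w)$ vanishes; pairing with the $f$-fixed class $a$ gives $|\langle f\rangle|\,(w.a)=0$, so $w.a=0$. But $w\in\mathrm{NS}(Y)$ is a numerical wall (resp. prime exceptional) divisor, hence --- up to sign --- an honest one by \cite[Theorem 1.2, Remark 1.3]{Ulrike2}, so $w^\perp$ is disjoint from $\mathcal{C}$, a contradiction. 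Thus $\Lambda_f\cap\mathcal{W}^{\bullet}=\emptyset$.

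The steps above are mostly bookkeeping on top of the cited results; the point requiring care is the compatibility between the numerical sets $\mathcal{W}(\Lambda)$, $\mathcal{W}(\Lambda)^{pex}$ and the genuine chamber structure of the positive cone of the orbifolds involved --- concretely, that the walls of the Kähler, resp. movable, cone of $Y$ are cut out exactly by $\mathcal{W}(\Lambda)\cap\mathrm{NS}(Y)$, resp. $\mathcal{W}(\Lambda)^{pex}\cap\mathrm{NS}(Y)$, and that a numerical prime exceptional class in $\mathrm{NS}(Y)$ is, up to sign, the class of an actual prime exceptional divisor. This is precisely what \cite[Theorem 1.2, Remark 1.3]{Ulrike2} together with the orbifold Torelli package of \cite{Lol4} supply; granting them, the proof is as sketched. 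I also expect to need the explicit orientation check in the ``if'' direction, namely that an element of $O^+(\Lambda)$ fixing $\omega$ preserves the component of the positive cone containing the Kähler cone rather than exchanging the two.
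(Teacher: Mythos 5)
Your proposal is correct and follows essentially the same route as the paper: the paper's proof simply invokes \cite[Theorems 2.16 \& 2.17]{gov23} (the analogous statement for OG'6-type manifolds) and notes that the same argument applies, and what you have written out is precisely that argument --- surjectivity of the period map plus the orbifold global Torelli theorem of \cite{Lol4} and $\Mon^2(\Lambda)=O^+(\Lambda)$ for the ``if'' direction, and the averaging/wall-divisor argument with \cite[Theorem 1.2, Remark 1.3]{Ulrike2} for the ``only if'' direction. The compatibility issues you flag at the end are exactly the inputs the paper takes from \cite{Ulrike0,Ulrike2}, so no gap remains.
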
%
\begin{proof}%
    The proof is verbatim the same as in the case of IHS manifolds. For the reader's convenience, we recall the proof of the sufficient part; see for instance \cite[Theorem 6.9]{mul25} for further details.\smallskip


    Let $f\in O^+(\Lambda)$ be of finite order, and suppose that $\Lambda_f$ is negative definite and contains no elements of $\mathcal{W}^{pex}(\Lambda)$ (resp. $\mathcal{W}(\Lambda)$). The real quadratic space $\Lambda^f\otimes\mathbb{R}$ has signature $(3,\ast)$, so we can find a general enough $f$-invariant isotropic vector $\omega\in \Lambda\otimes \mathbb{C}$ such that $\omega.\overline{\omega} > 0$ and
    \[(\RR\Rea(\omega)\oplus \RR\Ima(\omega))\cap \Lambda = \Lambda^f.\]
    By the surjectivity of the period map $\mathscr{P}_\Lambda$ (\Cref{surjectivity period map}), there exists a $\Lambda$-marked Nikulin-type orbifold $(X, \eta)$ with period $\mathbb{C}\omega$: It satisfies that $\eta(\textnormal{T}(X)) = \Lambda^f$. We define $f_X \coloneqq \eta^{-1}f\eta$: It lies in $O^+(\HH^2(X, \mathbb{Z}))$ and it preserves the Hodge structure, by definition. Moreover, since $f$ fixes a positive three-space, there exists a positive $(1,1)$-class $\kappa\in \HH^{1,1}(X, \mathbb{R})\cap \eta^{-1}(\Lambda^f\otimes \mathbb{R})$ which is fixed by $f_X$. By the assumption $\Lambda_f\cap\mathcal{W}^{pex}(\Lambda)=\emptyset \quad (\textnormal{resp. }\Lambda_f\cap\mathcal{W}(\Lambda)=\emptyset)$, one of $\kappa$ or $-\kappa$ lies in an exceptional chamber (resp. in a K\"ahler-type chamber) of the positive cone $\mathcal{C}_{X}$. Hence, up to changing $X$ to a bimeromorphic model, we have that $f_X$ preserves the fundamental exceptional chamber (resp. the K\"ahler cone) of $X$. By the Hodge version of the global Torelli theorem for irreducible symplectic orbifolds \cite[Theorem 1.1]{Ulrike0}, the isometry $f_X$ is induced by a symplectic birational (resp. regular) automorphism of $X$ and $f$ is effective (resp. regular effective).
\end{proof}%

\begin{rmk}
    We have seen in the previous section that $\Mon^2(\Lambda) = O^+(\Lambda)$ is maximal in $O(\Lambda)$: In particular, any isometry of $\Lambda$ with negative definite coinvariant lattice lies in $\Mon^2(\Lambda)$ (see the proof of \Cref{orient preserving in O+}). 
Since $O(\Lambda)/O^+(\Lambda)$ is generated by $-\id_\Lambda O^+(\Lambda)$ and $-\id_\Lambda$ lies in the center of $O(\Lambda)$, the $O^+(\Lambda)$-conjugacy classes are $O(\Lambda)$-conjugacy classes. 
Therefore, a classification of symplectic isometries up to $\Mon^2(\Lambda)$-conjugation is the same as up to $O(\Lambda)$-conjugation.%
\end{rmk}

\subsection{Computations and first consequences}\label{tablesection}%
Given an even lattice $L$, the first named author and Hofmann \cite{brandhorst-hofmann} describe an algorithm to compute a complete set of representatives of $O(L)$-conjugacy classes of isometries of finite order (with at most two distinct prime divisors). The algorithms have been implemented in the OSCAR package \cite[QuadFormAndIsom]{oscar-system} by the third named author.%

\begin{thm}\label{thm classification}%
    There exist exactly 32 conjugacy classes of symplectic effective isometries $f\in \Mon^2(\Lambda)$. Among them, 21 classes are represented by regular symplectic effective isometries. The associated numerical data is available in \Cref{tab: table symplectic}. Representatives of the conjugacy classes in terms of matrices are found in the folder ``symplectic'' in the database \cite{database}.%
\end{thm}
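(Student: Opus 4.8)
The plan is to apply the classification algorithm of \cite{brandhorst-hofmann} to the lattice $\Lambda = U(2)^{\oplus 3}\oplus E_8\oplus A_1^{\oplus 2}$, restricting attention to finite-order isometries whose coinvariant lattice $\Lambda_f$ is negative definite, and then to filter the resulting list through the conditions of \Cref{lemme symplectic}. The first observation I would record is that a symplectic isometry has finite order divisible only by small primes: since $\Lambda_f$ is negative definite of rank at most $21$ (in fact at most $\operatorname{rk}\Lambda - 3 = 20$, as $\Lambda^f$ must contain a positive-definite rank-$3$ part carrying the period), Euler's totient bound $\varphi(n)\le 20$ together with the constraints coming from the discriminant form of $\Lambda$ limits the possible orders $n$; one checks that only $n \in \{1,2,3,4,5,6,8,\dots\}$ with at most two prime divisors can occur, which is exactly the range the algorithm of \cite{brandhorst-hofmann} handles. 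This is why the hypothesis "at most two distinct prime divisors" in that algorithm is not a genuine restriction here.

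Next I would run, for each admissible order $n$, the enumeration of $O(\Lambda)$-conjugacy classes of isometries of order $n$ with negative definite coinvariant lattice. By \Cref{thm: monodromy} and \cite[Lemma 2.3]{gov23}, every such isometry automatically lies in $\Mon^2(\Lambda) = O^+(\Lambda)$, and by the remark following \Cref{lemme symplectic} the $O(\Lambda)$-classes coincide with the $O^+(\Lambda)$-classes, so no separate bookkeeping of orientations is needed. For each representative $f$ produced, one computes $\Lambda_f$ explicitly as a $\ZZ$-lattice with its embedding into $\Lambda$, and then tests the two emptiness conditions: $\Lambda_f \cap \mathcal{W}(\Lambda)^{pex} = \emptyset$ for the "symplectic birational" case, and $\Lambda_f\cap\mathcal{W}(\Lambda) = \emptyset$ for the "regular symplectic" case. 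Since $\Lambda_f$ is negative definite, each of these is a finite computation: one enumerates the (finitely many) vectors of square $-2,-4,-6,-12$ in $\Lambda_f$ and checks their divisibility in $\Lambda$ (and, for the square $-12$ vectors, the extra condition on the $U(2)^3$-component). Collecting the representatives that survive gives the two counts, $32$ and $21$, and the numerical invariants assembled in \Cref{tab: table symplectic} — the order of $f$, the genus of $\Lambda^f$ and $\Lambda_f$, and the discriminant forms — together with the matrix representatives stored in \cite{database}.

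The main obstacle is not conceptual but is the verification that the computer enumeration is complete and correct: one must be confident that the implementation \cite[QuadFormAndIsom]{oscar-system} correctly enumerates all $O(\Lambda)$-conjugacy classes in the relevant orders, with no class missed and none listed twice. Concretely, the delicate points are (a) the gluing/lifting step in \cite{brandhorst-hofmann} that reconstructs $(\Lambda, f)$ from its invariant and coinvariant parts together with a compatible isometry of the discriminant glue — here one must check that all choices of glue map are accounted for up to conjugacy — and (b) confirming, a posteriori, that distinct table entries are genuinely non-conjugate, which can be done by comparing the invariants $\Lambda^f$, $\Lambda_f$, the characteristic polynomial of $f$, and the action on the discriminant group, and by an explicit conjugacy search in the rare cases where these coincide. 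I would therefore present the proof as: (1) reduce to a finite list of orders by the totient/discriminant bound; (2) invoke the algorithm and implementation to enumerate candidates; (3) apply \Cref{lemme symplectic} to each; (4) cross-check non-conjugacy of the survivors via lattice-theoretic invariants. The arithmetic heart — steps (2) and (3) — is carried out by the cited software, and the honest statement is that correctness rests on that implementation, with the invariants in \Cref{tab: table symplectic} serving as an independently checkable certificate.
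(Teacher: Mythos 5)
Your proposal matches the paper's proof: both apply the algorithm of \cite{brandhorst-hofmann} to $\Lambda$, enumerate the $O(\Lambda)$-conjugacy classes of finite-order isometries with negative definite coinvariant lattice, and filter them through the conditions of \Cref{lemme symplectic} to obtain the counts $32$ and $21$, with correctness ultimately resting on the cited implementation. (One small slip in your side remark: $\rk\Lambda=16$, so $\Lambda_f$ has rank at most $13$, not $20$; this only strengthens your totient bound.)
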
%

\begin{proof}%
    We apply the algorithm of \cite{brandhorst-hofmann} to the lattice $\Lambda \coloneqq U(2)^{\oplus 3}\oplus E_8\oplus A_1^{\oplus2}$, and compute a complete set of representatives for the $O(\Lambda)$-conjugacy classes of finite order isometries with negative definite coinvariant lattice. For each such representative $f\in O^+(\Lambda)$, we determine whether it is symplectic effective by checking the condition $\Lambda_f\cap \mathcal{W}(\Lambda)^{pex}$ (see \Cref{lemme symplectic}). In that way, we obtain 32 symplectic effective isometries, up to conjugacy, and we determine which one are regular symplectic effective by testing whether their associated coinvariant lattice contains vectors of $\mathcal{W}(\Lambda)$. 
\end{proof}%
The computations needed for the proof of \Cref{thm classification} took approximately a week, with a major part of the time dedicated to determining even order isometries (due to $D_\Lambda\cong (\mathbb{Z}/2\mathbb{Z})^8$).%

\begin{nota} We fix the following lattice notation for the display in \Cref{tab: table symplectic}%
\[ V \coloneqq {\scriptstyle{\begin{pmatrix}%
        0&1\\1&1%
    \end{pmatrix}}},\quad K_p \coloneqq {\scriptstyle{\begin{pmatrix}%
        (p+1)/2&-1\\-1&2%
    \end{pmatrix}}},\quad H_p \coloneqq {\scriptstyle{\begin{pmatrix}%
        (p-1)/2&1\\1&-2%
    \end{pmatrix}}}. \]%
Moreover, the lattices $L_{13}$, $L_{15}$ and $L_{29}$ are abstract even definite lattices in the prescribed genera (in column ``$g(\Lambda_f)$'').%
\end{nota}%

\begin{rmk}\label{rmk unique entries}%
    By inspecting \Cref{tab: table symplectic}, we observe that the invariant lattices $\Lambda^f$
associated with symplectic effective isometries are pairwise non-isometric.
It follows that the conjugacy class of a symplectic effective isometry
$f \in O^+(\Lambda)$ is uniquely determined by the isometry class of its invariant lattice $\Lambda^f$.
Similarly, among regular symplectic effective isometries, the coinvariant lattices $\Lambda_f$
are pairwise non-isometric, so the conjugacy class of such an isometry is uniquely determined
by the isometry class of $\Lambda_f$.
\end{rmk}%

The previous remark motivates the following definitions.

 \begin{defi}\label{defiinvolution}%
 \hfill%
 \begin{enumerate}%
     \item Let $\iota\in O^+(\Lambda)$ be an involution. We say $\iota$ is \emph{exceptional} if $\Lambda_{\iota}\simeq A_1$.%
     \item Let $X$ be an orbifold of Nikulin-type. An involution $\iota$ on $X$ is called \emph{exceptional} if so is $\rho_X(\iota)\in O^+(\HH^2(X,\Z))$.%
 \end{enumerate}%
 \end{defi}%
 \begin{rmk}\label{standardremark}%
 According to \Cref{thm classification} and \Cref{tab: table symplectic}, the lattice $\Lambda$ has a unique conjugacy class of exceptional involutions. Note that such an involution is described in \cite[Section 4.2]{Ulrike2}. If $X$ is a Nikulin orbifold constructed from a Hilbert scheme of two points on a K3 surface, we can construct an exceptional involution on $X$; this involution exchanges the two exceptional divisors $\delta'$ and $\Sigma'$ (see \Cref{nota} for the notation).%
\end{rmk}%

\subsection{Characterization of deformation equivalent automorphisms}
In this section we show that the lattice action is enough to characterize the deformation class of an automorphism. For what follows, we let $T$ denote an abstract deformation type of irreducible symplectic orbifolds, and $\Lambda_T$ is the associated lattice.
\begin{prop}\label{mainstandardlemma}%
Let $(X,\eta_X)$ and $(Y,\eta_Y)$ be two $\Lambda_T$-marked irreducible symplectic orbifolds of deformation type $T$. Assume that $[(X,\eta_X)]$ and $[(Y,\eta_Y)]$ lie in the same connected component $\mathcal{M}_{\Lambda_T}^o$ of the moduli space of $\Lambda_T$-marked irreducible symplectic orbifolds of deformation type $T$. 
Let $f \in \Aut_s(X)$ and $g \in \Aut_s(Y)$ be two symplectic automorphisms of finite order. 
Assume that $\rho_Y$ is injective.%

Then the pairs $(X,f)$ and $(Y,g)$ are deformation equivalent if and only if $\eta_X \rho_X(f) \eta_X^{-1}$ is $\Mon^2(\Lambda_T,\eta_X)$-conjugate to $\eta_Y \rho_Y(g) \eta_Y^{-1}$.%
\end{prop}%
\begin{proof}%
The idea of the proof is to apply \cite[Lemma 2.12]{Ulrike2}. For simplicity, let us denote again $f \coloneqq \rho_X(f)$ and $g \coloneqq \rho_Y(g)$. 
We denote by $f',g'\in \Mon^2(\Lambda_T)$ the isometries induced by $f$ and $g$ respectively through the markings $\eta_X$ and $\eta_Y$. By assumption there exists a monodromy ${h'}\in \Mon^2(\Lambda_T,\eta_X)$ such that $h'f' = g'{h'}$. Let us denote by $h\coloneqq \eta_Y^{-1}h'\eta_X\colon \HH^2(X, \Z)\to \HH^2(Y, \Z)$ the associated isometry such that $hf = gh$. 
The isometry $\eta_X\circ h^{-1}$ is a marking for $Y$ and $[(Y,\eta_X\circ h^{-1})]$ lies in $\mathcal{M}_{\Lambda_T}^o$.%

We set $\Lambda'\coloneqq \eta_X(\HH^2(X,\Z)^f)$; it is a lattice of signature $ (3,b'-3)$ with $b'=\rk\Lambda'$. We consider the inclusion of period domains $\mathcal{D}_{\Lambda'}\subset \mathcal{D}_{\Lambda_T}$. Let $[(\widetilde{X},\widetilde{\eta})]\in\mathcal{M}_{\Lambda_T}^o\cap\mathscr{P}_{\Lambda_T}^{-1}(\mathcal{D}_{\Lambda'})$ be such that $\widetilde{\eta}(\Pic(\widetilde{X}))^{\bot}=\Lambda'$. By definition, we have that $\widetilde{\eta}(\Pic(\widetilde{X})) = \eta_X(\HH^2(X, \Z)_f)$ so the lattice $\Pic(\widetilde{X})$ cannot contain any wall divisor. It follows from \cite[Corollary 4.14]{Ulrike0} that the positive cone $\mathcal{C}_{\widetilde{X}}$ is equal to the K\"ahler cone $\mathcal{K}_{\widetilde{X}}$. 
So according to \cite[Lemma 2.12]{Ulrike2} $[(X,\eta_X)]$ and $[(Y,\eta_X\circ h^{-1})]$ can be connected by a sequence of generic twistor spaces (see \Cref{Twistordef}) whose image under the period domain is contained in $\mathcal{D}_{\Lambda'}$. Then, as explained in \cite[Remark 2.11]{Ulrike2} the automorphism $f$ induces an automorphism on all these twistor spaces. In particular, it induces an automorphism $\hat{g}$ on $Y$ and by construction $\rho_Y(\hat{g})=h\circ f\circ {h}^{-1}=g$. Since $\rho_Y$ is injective, $g=\hat g$. This shows that $(X,f)$ and $(Y,g)$ are deformation equivalent.%
 \end{proof}%
\begin{cor}\label{latticecara}
 Let $X$ and $Y$ be two orbifolds of Nikulin-type, and $f \in \Aut_s(X)$ and $g \in \Aut_s(Y)$ be two symplectic automorphisms of finite order. The pairs $(X,f)$ and $(Y,g)$ are deformation equivalent, if and only if the invariant lattices $\HH^2(X, \Z)^{\rho_X(f)}$ and $\HH^2(Y, \Z)^{\rho_Y(g)}$ are isometric, 
 if and only if the coinvariant lattices  $\HH^2(X, \Z)_{\rho_X(f)}$ and $\HH^2(Y, \Z)_{\rho_Y(g)}$ are isometric.%
\end{cor}%
\begin{proof}%
By \cite[Proposition 8.1]{Ulrike2} $\rho_X$ is injective.
We can find markings such that $[(X,\eta_X)]$ and $[(Y,\eta_Y)]$ lie in the same connected component of the moduli space.
Then, \Cref{mainstandardlemma} reduces the deformation equivalence of symplectic automorphisms to
monodromy conjugacy of their actions on $\Lambda$. Moreover,
\Cref{thm classification} and \Cref{rmk unique entries} tell us that the corresponding conjugacy classes in $\Mon^2(\Lambda,\eta_X)=\Mon^2(\Lambda)$ are determined by the invariant lattice, as well as by the coinvariant lattices.%
\end{proof}%
\subsection{Classification of standard symplectic automorphisms}%
The objective of this section is to determine which symplectic automorphisms are standard.%
\begin{defi}[Natural and standard automorphisms]%
\hfill%
\begin{enumerate}%
    \item  Let $Y$ be an irreducible holomorphic symplectic manifold of $\textnormal{K3}^{[2]}$-type endowed with a symplectic involution $i$. 
 Let $M'$ be the Nikulin orbifold constructed from $(Y,i)$ as in \Cref{Niku}. Let $g_0\in \Aut(Y)$ be such that $g_0$ commutes with $i$. 
 Then $g_0$ induces an automorphism $g$ on $M'$. The automorphism $g$ is called a \emph{natural automorphism} on $M'$ and the pair $(M',g)$ is said to be a \emph{natural pair}.%
 \item If $Y$ in point 1. is actually of the form $S^{[2]}$ where $S$ is a \textnormal{K3} surface, and $g_0\in \Aut(Y)$ is induced from an action on $S$, we call the automorphism $g\in \Aut(M')$ \emph{K3-natural}.%
 \item Let $X$ be an irreducible symplectic orbifold of Nikulin-type and let $f\in\Aut(X)$. The automorphism $f$ is said \emph{(K3-)standard} if the pair $(X,f)$ is deformation equivalent to a (K3-)natural pair. In this case, we say the pair $(X,f)$ is \emph{(K3-)standard}.%
\end{enumerate}%
\end{defi}%

We summarize the notation of the previous definition in the following diagram:%
\begin{equation}%
\xymatrix{ & Y\ar[d]^{\pi}\ar@(ur,dr)[]^{g_0}\\%
\ar@(ul,dl)[]_{g}M'\ar[r]^r & Y/i\ar@(ur,dr)[]^{g}.}%
\end{equation}%
\begin{rmk}\label{naturalmorphism}%
By definition, $M'$ admits a natural automorphism $f$ of finite order $m\geq 1$ if the associated $Y$ admits an automorphism $f_0$, commuting with $i$, and $f$ is induced by $f_0$. By considering the group $G_0 \coloneqq \langle f_0, i\rangle$, we may reformulate the previous equivalently by saying that $Y$ admits an automorphism group $G_0$ with $i\in G_0$ a central symplectic involution so that $G_0/\left\langle i\right\rangle $ is cyclic of order $m$.%
\end{rmk}%

 \begin{rmk}\label{latticecararemark}
\Cref{latticecara} shows that being standard for a finite order symplectic automorphism is completely characterized by its action on the second cohomology lattice. To be more precise, let $(X,f)$ be an orbifold of Nikulin-type endowed with a finite order symplectic automorphism; the action of $f$ on $\Lambda$ is described by one of the entries of \Cref{tab: table symplectic}. If we can find a natural automorphism with the same action on $\Lambda$ (modulo conjugation) then $f$ is standard.%
 \end{rmk}%

\begin{defi}\label{defistandard}%
 Let $f\in O^+(\Lambda)$ be a finite order isometry. We say $f$ is \emph{standard} if there exists a standard pair $(X, g)$ and a marking $\eta$ of $X$ such that $f$ is conjugate to $\eta^{-1}\rho_X(g)\eta$.%
\end{defi}%

Using the idea explained in  \Cref{naturalmorphism}, we prove the following theorem.

\begin{thm}\label{prop type of symp bir}%
Let $(X,f)$ be an orbifold of Nikulin-type endowed with a symplectic birational automorphism of finite order. Then exactly one of the following holds:%
\begin{enumerate}[(i)]%
    \item $f$ is not regular,%
    \item $(X, f)$ is a standard pair,%
    \item there exists a marking $\eta$ of $X$ such that $\eta\rho_X(f)\eta^{-1}$ is of the form $h\circ\iota$ where $h$ is standard and $\iota$ is an exceptional involution,%
    \item the order of $f$ is divisible by 5,%
    \item $f$ is an involution such that $\HH^2(X, \mathbb{Z})_{\rho_X(f)}\simeq D_{10}(2)$.%
\end{enumerate}%
\end{thm}%
\begin{proof}%
From the information available in the first eight columns of \Cref{tab: table symplectic}, we only need to determine which are the standard isometries of $\Lambda$. 
According to \Cref{mainstandardlemma} and \Cref{standardremark}, standard isometries of $\Lambda$ are conjugate to the ones realized by a natural automorphisms. It therefore suffices to determine which entries of \Cref{tab: table symplectic} can be realized by natural symplectic automorphisms on Nikulin orbifolds.%

Let $M'$ be a Nikulin orbifold obtained from $(Y,i)$ with $Y$ a manifold of $\textnormal{K3}^{[2]}$-type and $i$ a symplectic involution on $Y$. 
As explained in \Cref{naturalmorphism}, to obtain a natural symplectic automorphism $f$ of order $r$ on the Nikulin orbifold $M'$, we need a symplectic group $G_0$ on $Y$ with $i\in G_0$ a central involution such that 
$G_0/\left\langle i\right\rangle $ is a cyclic group of order $r$. Note moreover that in this case $\rk \HH^2(M',\Z)^f=\rk \HH^2(Y,\Z)^{G_0}+1$; indeed, there is the exceptional divisor $\Sigma'$ which is always fixed by a natural automorphism (see \Cref{nota} for the notation). We can look at \cite[Table 12]{hohn} to list all such possible groups $G_0$ (note that the groups of symplectic automorphisms on an IHS manifold of $\textnormal{K3}^{[2]}$-type are the ones which are not with a ``--'' in the column ``Type'', see also \cite[Theorem 8.7]{hohn}). We list all the possibilities in the following table.%
\begin{center}%
{
\setlength{\tabcolsep}{5pt}%
\renewcommand%
\arraystretch{1.3}%
\rowcolors{1}{white}{lightgray!40!white}%
 \begin{tabular}{ccc|ccc|ccc}%
   \hline%
    $r$&$G_0$&$\rk \HH^2(M',\Z)^f$&$r$&$G_0$&$\rk \HH^2(M',\Z)^f$&$r$&$G_0$&$\rk \HH^2(M',\Z)^f$\\%

    \hline%

    $2$&$C_2\times C_2$&12&$3$&$C_6$&6&$6$&$C_{12}$&4\\%

$2$&$C_4$&10&$4$&$C_8$&6&$6$&$C_{2}\times C_6$&6\\%

$3$&$C_6$&8&$4$&$C_2\times C_4$&8&$7$&$C_{14}$&4\\%

    \hline%
\end{tabular}%
}
\end{center}%
Then comparing with the data computed for the proof of \Cref{thm classification}, and reported in \Cref{tab: table symplectic}, we obtain our result.%

\end{proof}%

All types of regular symplectic involutions on Nikulin-type orbifolds were already known before this work, at the exception of the one described in entry no. 7 of \Cref{tab: table symplectic}: The exceptional involutions (entry no. 2 of \Cref{tab: table symplectic}) are described in \cite[Section 4.2]{Ulrike2}, and the other four (entries no. 3--6) were determined and studied by Piroddi in her PhD thesis (see for instance \cite{piroddi}). Note that in \cite[Remark 2.3.7]{piroddi}, Piroddi mentioned that knowing the invariant and coinvariant lattices of a symplectic involution was possibly not enough to conclude that it is standard. A consequence of \Cref{mainstandardlemma} and \Cref{prop type of symp bir} is that knowing the isometry class of the invariant lattice of such an involution is actually enough to say whether it is standard.

\begin{rmk}%
    The two isometries of type (iv) from \Cref{prop type of symp bir} are actually related by an exceptional involution, meaning that entry no. 31 of \Cref{tab: table symplectic} can be obtained from entry no.~20 by composing with an exceptional involution. The only difference with the isometries of type (iii) is that entry no. 20 is not standard.%
\end{rmk}%

\subsection{Further results and comments}

\subsubsection*{Finite groups of symplectic birational automorphisms}

It follows from the proof of \Cref{lem:OD_Lambda_generated_by_reflections} (see also \cite[Proposition 2.6]{brandhorst-veniani}) that $O(D_{\Lambda})$ is isomorphic to the group $C_2\times O_7(2)$, where $O_7(2)\cong O(K)\cong \text{Sp}(K/R)$. The $C_2$ factor is generated by the isometry exchanging the two copies of $A_1$ in $\Lambda$, and $O_7(2)$ is a finite simple group of order $1451520 = 2^9\times 3^4\times 5\times 7$.%

\begin{rmk}\label{rmk: image disc}%
    By analyzing the data computed for the proof of \Cref{thm classification}, we observe that for each $f\leq O^+(\Lambda)$ as described in \Cref{tab: table symplectic}, the image $D_f$ of $f$ along the map $O^+(\Lambda)\to O(D_\Lambda)$ lies in the normal subgroup $O_7(2)\trianglelefteq O(D_\Lambda)$. Similarly, as reported in the column labeled ``$\textnormal{ord}(D_f)$'', the only nontrivial symplectic effective isometries of finite order acting trivially on $D_\Lambda$ are the exceptional involutions.%
\end{rmk}%

\begin{prop}\label{lemme action of disc}%
    Let $X$ be an orbifold of Nikulin-type, let $G\leq \textnormal{Bir}_s(X)$ be a finite subgroup and let $G^\sharp$ be the kernel of the map $G\to O(D_{\HH^2(X, \Z)})$. Then $\#G^\sharp \leq 2$ and $G/G^\sharp$ embeds into $O_7(2)$.%
\end{prop}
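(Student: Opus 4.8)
The plan is to transport the statement about $G \leq \mathrm{Bir}_s(X)$ to a statement about finite subgroups of $O^+(\Lambda)$ via a marking, and then reduce to the structure of $O(D_\Lambda)$ established above. First I would fix a marking $\eta\colon \HH^2(X,\Z)\to \Lambda$, which identifies $\rho_X(G)$ with a finite subgroup $H \leq O^+(\Lambda)$ (recall $\rho_X$ is injective for Nikulin-type, so $H\cong G$); under $\eta$ the kernel $G^\sharp$ corresponds to $H^\sharp := H \cap O^\sharp(\Lambda)$, the kernel of the restriction to $H$ of the natural map $\pi_D\colon O^+(\Lambda)\to O(D_\Lambda)$. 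So it suffices to show $\#H^\sharp \leq 2$ and that $\pi_D(H)=H/H^\sharp$ lands in $O_7(2)$. The latter is exactly \Cref{rmk: image disc}: every finite-order isometry in $H$ occurs in \Cref{tab: table symplectic} (each element of $H$ is symplectic with negative definite coinvariant lattice), hence has image in $O_7(2)\trianglelefteq O(D_\Lambda)$; since $O_7(2)$ is a subgroup, $\pi_D(H)\leq O_7(2)$ and thus $G/G^\sharp$ embeds into $O_7(2)$.

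For the bound $\#H^\sharp \leq 2$, I would argue as follows. Every nontrivial $h \in H^\sharp$ is a symplectic finite-order isometry acting trivially on $D_\Lambda$, hence by \Cref{rmk: image disc} it is an exceptional involution, i.e. $\Lambda_h \cong A_1$ and $\Lambda^h \cong U(2)^{\oplus 3}\oplus E_8\oplus A_1$. Thus $H^\sharp$ is an elementary abelian $2$-group all of whose nontrivial elements are exceptional involutions. Suppose $h_1 \neq h_2$ are two such; since they commute and each acts as $-1$ on a rank-one summand and $+1$ on its orthogonal complement, the subgroup $\langle h_1,h_2\rangle$ decomposes $\Lambda\otimes\Q$ into common eigenspaces, and $h_1 h_2$ is again an involution acting as $-1$ on the (at most rank $2$) span of $\Lambda_{h_1}$ and $\Lambda_{h_2}$. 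If $\Lambda_{h_1}$ and $\Lambda_{h_2}$ were distinct rank-one sublattices, then $\Lambda_{h_1 h_2}$ would have rank $2$, contradicting that $h_1 h_2$ (being in $H^\sharp\setminus\{\id\}$) is an exceptional involution with $\Lambda_{h_1h_2}\cong A_1$ of rank $1$. Hence all nontrivial elements of $H^\sharp$ have the same rank-one coinvariant lattice, inside which there is at most one nontrivial involution; so $\#H^\sharp \leq 2$.

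The main obstacle is pinning down that two distinct exceptional involutions in $H^\sharp$ cannot coexist — in particular checking that $\Lambda_{h_1h_2}$ really has rank $2$ when $\Lambda_{h_1}\neq\Lambda_{h_2}$, which requires knowing that $\Lambda_{h_1}$ and $\Lambda_{h_2}$ span a rank-$2$ sublattice (they cannot be proportional, being distinct primitive rank-one lattices unless equal) and that $h_1h_2$ acts as $-\id$ there and $+\id$ on the orthogonal complement, so $\mathrm{rk}\,\Lambda_{h_1h_2}=2$. An alternative, cleaner route for this last point: $O^\sharp(\Lambda)$ is a normal subgroup and by \cite[Theorem 1.14.2]{nikulin} (used already in \Cref{lem:exists_lift_square_4}) $\pi_D$ is surjective, so $H^\sharp \leq O^\sharp(\Lambda)$; one then invokes that the only symplectic finite-order elements of $O^\sharp(\Lambda)$ are $\id$ and the exceptional involutions (\Cref{rmk: image disc}), together with the fact that exceptional involutions with distinct coinvariant lattices generate an isometry outside this list, forcing $\#H^\sharp\le 2$. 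I would present the eigenspace argument as the primary one and cite \Cref{rmk: image disc} and \Cref{tab: table symplectic} for the classification input.
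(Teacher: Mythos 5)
Your proposal is correct and takes essentially the same route as the paper: both identify the nontrivial elements of $G^\sharp$ as exceptional involutions (reflections in a $(-2)$-vector of divisibility $2$) via \Cref{rmk: image disc}, and then exclude two distinct such elements because their product, again lying in $G^\sharp$, would have coinvariant lattice of rank $2$ rather than $1$ --- the paper compresses your eigenspace computation into the one-line remark that the composition of two distinct reflections is not a reflection. The final step, embedding $G/G^\sharp$ into $O_7(2)$, is identical in both arguments.
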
%

\begin{proof}%
    Fix a marking $\eta\colon \HH^2(X, \mathbb{Z})\to \Lambda$ of $X$ and, by abuse of notation, let us denote again $G \coloneqq \eta\rho_X(G)\eta^{-1}\leq O^+(\Lambda)$. Let $\pi\colon G\to O(D_\Lambda)$ be the discriminant representation of $G$, with kernel $G^\sharp \coloneqq \ker\pi$. Let $f\in G^\sharp$: According to \Cref{rmk: image disc}, either $f$ is the identity, or it is given by the reflection along a vector of square $-2$ and divisibility 2 in $\Lambda$. There are two cases. First, if $G^\sharp = \{\id\}$, then $G$ embeds into $O_7(2)$ according to \Cref{rmk: image disc}. Second, if $G^\sharp$ is nontrivial, then there exists at least one $f$ as before. Suppose that there exist two such isometries $f,g\in G^\sharp$ which are given by the reflections along two vectors of square $-2$ and divisibility 2 in $\Lambda$. The composition $f\circ g\in G^\sharp$ is thus either the identity or a reflection. But the composition of two reflections is not a reflection, and we deduce that $f=g$. Hence $G^\sharp$ is cyclic of order 2, generated by an isometry $f$ which is $O^+(\Lambda)$-conjugate to the exceptional involution $\iota$. Following \Cref{rmk: image disc}, we then obtain that $G/G^\sharp$ embeds into $O_7(2)$.%
\end{proof}%

Note that this differs with the behavior of finite groups of symplectic birational automorphisms on the known IHS manifolds. In fact, as reported in \cite[Lemma 6.15]{mul25} and the references therein, if $X$ is a known example of IHS manifolds and $G\leq\Bir(X)$ is a finite subgroup of birational automorphisms, then $[G:G^\sharp]\leq 2$ holds. Moreover, if $G$ is regular and symplectic, then $G = G^\sharp$ (see \cite[Theorem~26]{mon16b}, \cite[Lemma~5.1]{mtw18}, \cite[Theorem~1.1]{gov23}, \cite[Theorem~1.1]{ggov24}). It follows that the methods used to classify finite groups of symplectic birational automorphisms on the known IHS manifolds do not apply to the case of Nikulin-type orbifolds. We refer to \cite{mon16b}, \cite[\S 5]{mtw18}, \cite{ggov24} and \cite{mm23} for an overview of this problem in the case of IHS manifolds. See also \cite{mon13} for an early description of the general approach to this problem, and \cite[\S 7.2 and 9.2]{mul25} for a more recent summary and generalization of the aforementioned methods.

\begin{cor}%
 Let $(X, f)$ be an orbifold of Nikulin-type endowed with a symplectic birational automorphism of finite order $m$. 
 We set $D_{f}$ for the action on $D_{\HH^2(X, \Z)}$ induced by $f$. If $D_{f}$ has order less than $m$, then $f^{m/2}$ is an exceptional involution on $X$.%
\end{cor}%

\begin{proof}%
    Let $G \coloneqq \langle f\rangle$ be the cyclic group generated by $f$: By our assumption on $D_f$, we have that $G^\sharp$ is nontrivial, and \Cref{lemme action of disc} tells that $G^\sharp$ has order 2. This implies that $m$ is even, and in fact, $G^\sharp$ is generated by $f^{m/2}$. It then follows from our classification that $f^{m/2}$, being an involution acting trivially on $D_{\HH^2(X, \Z)}$, is exceptional (see again  \Cref{rmk: image disc}).
\end{proof}%

\subsubsection*{Nonsymplectic automorphisms of prime order}
The algorithm from \cite{brandhorst-hofmann} can also be applied to compute a complete set of representatives for the isometry classes of effective prime order isometries of $\Lambda$ which are not symplectic. Similarly to what was done in the symplectic case, we obtain the following.%

\begin{prop}\label{prop class nonsymplectic}%
    Up to $\Mon^2(\Lambda)$-conjugation, there exist in $\Mon^2(\Lambda)$ exactly:%
    \begin{enumerate}[(i)]%
        \item 2226 nonsymplectic effective groups of order 2;%
        \item 39 nonsymplectic effective groups of order 3;%
        \item 6 nonsymplectic effective groups of order 5;%
        \item 4 nonsymplectic effective groups of order 7.%
    \end{enumerate}%
    Representatives of the conjugacy classes in terms of matrices are found in the folder "nonsymplectic" in the database \cite{database}.%
\end{prop}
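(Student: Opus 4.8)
The plan is to follow the strategy used for \Cref{thm classification}, with \Cref{lemme symplectic} replaced by its non-symplectic counterpart (compare \cite[\S2.5 \& \S2.6]{gov23}). So the first step is to establish the realizability criterion for non-symplectic isometries of prime order: an isometry $f\in\Mon^2(\Lambda)=O^+(\Lambda)$ of prime order $p$ is induced by a non-symplectic birational automorphism of some orbifold of Nikulin-type if and only if its coinvariant lattice $\Lambda_f$ has signature $(2,\ast)$ (equivalently, $\Lambda^f$ has signature $(1,\ast)$). The \emph{only if} direction is immediate: the period of $X$ spans, together with its conjugate, a positive definite $2$-plane, and since the automorphism acts non-trivially on $\HH^{2,0}$ this plane lies in $\Lambda_f\otimes\R$, while an invariant K\"ahler class contributes the remaining positive direction to $\Lambda^f\otimes\R$. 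For the converse, the real algebra $\Q[f]\otimes\R$ splits $\Lambda_f\otimes\R$ into $(p-1)/2$ complex blocks each carrying a Hermitian form; since each block contributes an even number of positive directions and there are exactly two in total, precisely one block is Hermitian-indefinite, and after replacing $f$ by a suitable generator of $\langle f\rangle$ we may assume the corresponding eigenspace is $V_{\zeta_p}\subset\Lambda\otimes\C$ for a fixed primitive $p$-th root of unity $\zeta_p$. Then a period $\omega\in V_{\zeta_p}\cap\mathcal{D}_\Lambda$ exists -- the condition $\omega^2=0$ is automatic for $p$ odd, since $V_{\zeta_p}$ is totally isotropic, and follows from the signature for $p=2$ -- and by surjectivity of the period map for irreducible symplectic orbifolds \cite[Theorem 1.1]{Lol4} it is realised by an orbifold $X$ of Nikulin-type, on which, by the global Torelli theorem, $f$ is induced by a birational automorphism. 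For $\omega$ generic in $V_{\zeta_p}$ one moreover has $\Pic(X)=\Lambda^f$: the minimal polynomial $\Phi_p$ of $f|_{\Lambda_f}$ is irreducible, so $\Lambda_f\otimes\Q$ is an irreducible $\Q[f]$-module, and hence the only $f$-invariant rational subspace of $\Lambda\otimes\Q$ orthogonal to both $\omega$ and $\overline\omega$ is $\Lambda^f\otimes\Q$. Thus $f$ fixes $\Pic(X)$ pointwise, preserves the K\"ahler cone, and is therefore effective -- in fact induced by a regular automorphism; in particular, unlike in the symplectic case, there is no wall divisor obstruction.

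The second step is to enumerate, for each $p\in\{2,3,5,7\}$, a complete set of representatives of the $O(\Lambda)$-conjugacy classes of isometries of $\Lambda$ of order $p$ whose invariant and coinvariant lattices have the signatures above, using the algorithm of \cite{brandhorst-hofmann} as implemented in \cite[QuadFormAndIsom]{oscar-system}: it produces them by running over the admissible characteristic polynomials $(x-1)^a\Phi_p(x)^c$ and the admissible glue maps between an invariant lattice of signature $(1,\ast)$ and a coinvariant lattice of signature $(2,\ast)$. As recalled for the symplectic case, $-\id_\Lambda$ is central and $O(\Lambda)/O^+(\Lambda)=\langle-\id_\Lambda O^+(\Lambda)\rangle$, so $O(\Lambda)$-conjugacy coincides with $\Mon^2(\Lambda)$-conjugacy; and every isometry produced lies in $O^+(\Lambda)$, since isometries of odd order are squares and a non-symplectic involution with coinvariant lattice of signature $(2,\ast)$ acts by determinant $+1$ on a maximal positive definite subspace, hence is orientation-preserving. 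Passing from these isometries to the cyclic subgroups they generate and counting then yields $2226$, $39$, $6$ and $4$ conjugacy classes for $p=2,3,5,7$ respectively, with representing matrices recorded in the folder \texttt{nonsymplectic} of \cite{database}.

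The main obstacle is twofold. Conceptually, it lies in the realizability criterion of the first step: one must check via surjectivity of the period map that a generic period in $V_{\zeta_p}$ genuinely arises from an orbifold of Nikulin-type, identify its Picard lattice as $\Lambda^f$ using the $\Q[f]$-irreducibility of $\Lambda_f\otimes\Q$, and -- for $p$ odd -- keep track of the fact that only certain powers of $f$ admit a period at all, so that the object to be classified is the cyclic group $\langle f\rangle$ rather than the individual isometry. Computationally, exactly as in \Cref{thm classification}, the prime $p=2$ is by far the most expensive: since $D_\Lambda\cong(\Z/2\Z)^8$, the number of glue maps between candidate invariant and coinvariant lattices to be examined is very large, which is reflected in the count $2226$.
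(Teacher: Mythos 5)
Your proposal follows essentially the same route as the paper: the paper likewise reduces to classifying, via the algorithm of \cite{brandhorst-hofmann}, isometries with $\Lambda^f$ of signature $(1,\ast)$ and $\ker(f+f^{-1}-\zeta_p-\zeta_p^{-1})$ of signature $(2,\ast)$ (citing \cite[Theorem 3.13]{bc22} for the realizability criterion you rederive), and makes the same key observation that no wall-divisor condition is needed because generically the N\'eron--Severi lattice is pointwise fixed. Your write-up simply supplies more detail on the eigenspace/period argument and the passage from isometries to cyclic groups, both of which the paper leaves implicit.
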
%

\begin{proof}%
    Let $p$ be a prime number, and fix a primitive $p$th root of unity $\zeta_p$. 
    We proceed similarly as in the proofs of \Cref{lemme symplectic} and \Cref{thm classification}. However, this time, following \cite[Theorem 3.13]{bc22}, we classify isometries $f\in \Mon^2(\Lambda)$ whose invariant lattice has signature $(1, \ast)$ and such that the real quadratic space%
    \[\ker(f_{\mathbb{R}}+f_{\mathbb{R}}^{-1}-\zeta_p-\zeta_p^{-1})\]%
    has signature $(2, \ast)$. Indeed, the previous conditions ensure that we can find a general enough eigenvector $\omega\in\Lambda_f\otimes\mathbb{C}$ of $f_{\mathbb{C}}$, associated to the eigenvalue $\zeta_p$, satisfying 
    \[(\mathbb{R}\Rea(\omega)\oplus\mathbb{R}\Ima(\omega))^\perp\cap \Lambda = \Lambda^f.\]
    By the surjectivity of the period map $\mathscr{P}_{\Lambda}$ (\Cref{surjectivity period map}), there exists a $\Lambda$-marked Nikulin-type orbifold $(X, \eta)$ with period $\mathbb{C}\omega$. By what we have described above, the pair $(X, \eta)$ verifies $\eta(\textnormal{NS}(X)) = \Lambda^f$ and $\eta(\textnormal{T}(X)) = \Lambda_f$. It follows that the isometry $f_X \coloneqq \eta^{-1}f\eta$ is a Hodge monodromy (by the choice of $\omega$) and it fixes pointwise the N\'eron--Severi lattice of $X$. By the Hodge version of the global Torelli theorem for irreducible symplectic orbifolds \cite[Theorem 1.1]{Ulrike0}, the isometry $f_X$ is induced by a nonsymplectic automorphism on $X$, and $f$ is therefore effective. Note that this time, there are no extra numerical conditions as in \Cref{lemme symplectic}, because of the trivial action of $f_X$ on $\textnormal{NS}(X)$.
    
\end{proof}%
The computations needed for the proof of \Cref{prop class nonsymplectic} were realized on a span of 2 months, and required additional tweaks on a case-by-case basis for the involutions.

\newpage
\subsection{Table of results}
For the entries in \Cref{tab: table symplectic} corresponding to case (ii) in \Cref{prop type of symp bir}, we let the last column be ``K3'', resp. ``$\text{K3}^{[2]}$'', if the pair is K3-standard, resp. standard but not K3-standard. For the entries corresponding to symplectic automorphisms of type (iii), we give the number ``no.'' of the associated standard isometry and we add ``$\circ\ \iota$''. For the remaining types (i), (iv) and (v), we leave the last column empty ``\textemdash''.%

The coinvariant lattices $\Lambda_f$ are listed if they are easy to describe. We let moreover $L_{13}$, $L_{15}$ and $L_{29}$ be abstract negative definite lattices: If two entries share the same $L_i$ in the column ``$\Lambda_f$'', it means that the associated coinvariant lattices are isometric. In the other cases, the entry is left blank and the associated coinvariant lattice can be found in the database \cite{database}.%

The notation $g(L)$, given a lattice $L$, refers to the symbol of the genus of $L$ following Conway--Sloane convention (see \cite[Chapter 15]{splg}).

\begin{center}
\captionof{table}{Conjugacy classes of symplectic isometries of $\Lambda$ (see \Cref{thm classification})}
{\footnotesize
\setlength{\tabcolsep}{1.5pt}
\renewcommand
\arraystretch{1.5}
\rowcolors{1}{white}{lightgray!40!white}
    \begin{tabular}{ccccccccc}\label{tab: table symplectic}
   
     no.&$\text{ord}(f)$&$\text{ord}(D_f)$&$\Lambda^f$&$g(\Lambda^f)$&$\Lambda_f$&$g(\Lambda_f)$&Regular&Type\\

    \hline
    1&1&1&$\Lambda$&$\II_{(3,13)}2^8_6$&\textemdash&\textemdash&True&K3\\

    2&2&1&$U^{\oplus3}\oplus D_8^\vee(2)\oplus A_1$&$\II_{(3,12)}2^7_7$&$A_1$&$\II_{(0,1)}2^1_7$&True&$\text{(no. 1)}\circ\iota$ \\

    3&2&2&$U^{\oplus2}\oplus U(2)\oplus D_4(2)\oplus A_1^{\oplus2}$&$\II_{(3,9)}2^6_24^2$&$D_4(2)$&$\II_{(0,4)}2^{-2}4^{-2}$&True& K3 \\

    4&2&2&$U^{\oplus2}\oplus U(2)\oplus D_4(2)\oplus A_1$&$\II_{(3,8)}2^{5}_34^{2}$&$D_4(2)\oplus A_1$&$\II_{(0,5)}2^{-3}_74^{-2}$&True& $\text{(no. 3)}\circ\iota$\\

    5&2&2&$U(2)^{\oplus3}\oplus A_1^{\oplus2}\oplus A_1(2)^{\oplus 2}$&$\II_{(3,7)}2^8_64^2_6$&$D_6(2)$&$\II_{(0,6)}2^{-4} 4^{-2}_6$&True& K3\\

    6&2&2&$U(2)^{\oplus2}\oplus V(2)\oplus A_1\oplus A_1(2)^{\oplus2}$&$\II_{(3,6)}2^7_74^2_6$&$D_6(2)\oplus A_1$&$\II_{(0,7)}2^{-5}_74^{-2}_6$&True&  $\text{(no. 5)}\circ\iota$\\

    7&2&2&$U(2)^{\oplus 2}\oplus V(4)$&$\II_{(3,3)}2^44^2_0$&$D_{10}(2)$&$\II_{(0, 10)}2^84^2_6$&True& \textemdash\\

    8&2&2&$U(2)\oplus V(2)\oplus U(4)\oplus A_1(2)^{\oplus2}$&$\II_{(3,5)}2^4_04^4_6$&\textemdash&$\II_{(0,8)}2^{-4}_64^{-4}_6$&False&\textemdash\\

    9&3&3&$V(6)^{\oplus2}\oplus A_1(-1)\oplus A_1(3)$&$\II_{(3,3)}2^{-6}_63^5$&\textemdash&$\II_{(0,10)}2^{-2}3^{-5}$&True&$\text{K3}^{[2]}$\\

    10&3&3&$U\oplus U(3)\oplus U(6)\oplus A_1^{\oplus2}$&$\II_{(3,5)}2^4_63^4$&\textemdash&$\II_{(0,8)}2^43^4$&True&K3\\

    11&4&4&$U\oplus U(4)\oplus V(4)\oplus A_1^{\oplus 2}$&$\II_{(3,5)}2^2_64^4_0$&\textemdash&$\II_{(0,8)}2^24^4_0$&True&K3\\

    12&4&4&$U\oplus U(4)\oplus V(4)\oplus A_1$&$\II_{(3,4)}2^1_74^4_0$&\textemdash&$\II_{(0,9)}2^3_74^4_0$&True& $\text{(no. 11)}\circ\iota$\\

    13&4&4&$U(4)\oplus A_1(-2)\oplus A_1(-4)\oplus A_1^{\oplus2}$&$\II_{(3,3)}2^2_64^3_18^1_1$&$L_{13}$&$\II_{(0,10)}2^24^3_78^1_7$&True&K3\\

    14&4&4&$U(4)\oplus A_1(-2)\oplus A_1(-4)\oplus A_1$&$\II_{(3,2)}2^1_74^3_18^1_1$&\textemdash&$\II_{(0, 11)}2^3_74^3_78^1_7$&True& $\text{(no. 13)}\circ\iota$\\

    15&4&4&$U\oplus U(4)^{\oplus 2}$&$\II_{(3,3)}4^4$&$L_{15}$&$\II_{(0, 10)}2^4_64^4$&False&\textemdash \\

    16&4&4&$V(2)\oplus U(4)^{\oplus2}$&$\II_{(3,3)}2^2_04^4$&$L_{15}$&$\II_{(0, 10)}2^4_64^4$&False&\textemdash \\

    17&4&4&$U(2)\oplus U(4)^{\oplus 2}$&$\II_{(3,3)}2^24^4$&$L_{15}$&$\II_{(0, 10)}2^4_64^4$&False&\textemdash \\

    18&4&4&$U(2)\oplus U(4)\oplus A_1(4)\oplus A_1(-2)$&$\II_{(3,3)}2^24^3_18^1_7$&$L_{13}$&$\II_{(0,10)}2^24^3_58^1_1$&False&\textemdash \\

    19&4&4&$U(4)\oplus A_1(-2)\oplus A_1(-4)$&$\II_{(3,1)}4^3_18^1_1$&\textemdash&$\II_{(0,12)}2^44^3_58^1_7$&False&\\

    20&5&5&$V(10)\oplus A_1(-1)\oplus A_1(-5)$&$\II_{(3,1)}2^{-4}_65^{-3}$&\textemdash&$\II_{(0,12)}2^{-4}5^{-3}$&True&\textemdash \\

    21&6&3&$U\oplus U(3)\oplus U(6)\oplus A_1$&$\II_{(3,4)}2^3_73^4$&\textemdash&$\II_{(0,9)}2^5_73^4$&True&$\text{(no. 10)}\circ\iota$\\

    22&6&6&$U(6)\oplus A_1(-1)^{\oplus2}\oplus A_1(3)\oplus A_1$&$\II_{(3,3)}2^{-6}_63^{-3}$&\textemdash&$\II_{(0,10)}2^{-6}3^3$&True&K3\\

    23&6&6&$U(6)\oplus A_1(-1)^{\oplus2}\oplus A_1(3)$&$\II_{(3,2)}2^{-5}_73^{-3}$&\textemdash&$\II_{(0,11)}2^{-7}_73^3$&True&$\text{(no. 22
    )}\circ\iota$\\

    24&6&6&$V(12)\oplus A_1(-1)\oplus A_1(-3)$&$\II_{(3,1)}2^2_64^{-2}_23^3$&\textemdash&$\II_{(0,12)}2^24^{-2}_63^{-3}$&True&$\textnormal{K3}^{[2]}$\\

    25&6&6&$A_1(2)\oplus A_1(-2)\oplus A_2(-2)$&$\II_{(3,1)}2^{-2}4^2_03^1$&\textemdash&$\II_{(0,12)}2^64^{-2}_23^{-1}$&False&\textemdash \\

    26&6&6&$V(-6)\oplus A_1(-2)^{\oplus 2}$&$\II_{(3,1)}2^2_04^2_23^{-2}$&\textemdash&$\II_{(0,12)}2^{-6}_44^{-2}_43^{-2}$&False&\textemdash \\

    27&7&7&$K_7\oplus H_7(2)$&$\II_{(3,1)}2^2_67^2$&\textemdash&$\II_{(0,12)}2^67^2$&True&$\textnormal{K3}^{[2]}$\\

    28&8&8&$U(8)\oplus A_1(-1)\oplus A_1(-2)$&$\II_{(3,1)}2^1_14^1_18^2$&\textemdash&$\II_{(0,12)}2^{-5}_34^{-1}_58^2$&False&\textemdash \\

    29&8&8&$U\oplus A_1(-4)^{\oplus 2}$&$\II_{(3,1)}8^2_2$&$L_{29}$&$\II_{(0,12)}2^{-6}_28^{-2}_2$&False&\textemdash \\

    30&8&8&$U(2)\oplus A_1(-4)^{\oplus 2}$&$\II_{(3,1)}2_2^28^2_2$&$L_{29}$&$\II_{(0, 12)}2_2^{-6}8^{-2}_2$&False&\textemdash \\

    31&10&5&$A_1(-5)^{\oplus3}$&$\II_{(3,0)}2^{-3}_75^{-3}$&\textemdash&$\II_{(0,13)}2^{-5}_75^{-3}$&True&\textemdash \\

    32&14&7&$K_7\oplus A_1(-7)$&$\II_{(3,0)}2^1_77^2$&\textemdash&$\II_{(0,13)}2^7_77^2$&True&$\text{(no. 27)}\circ\iota$\\

    \hline    
\end{tabular}
}
\end{center}

\newpage
\bibliographystyle{amssort}%
\noindent%
Gr\'egoire \textsc{Menet}%

\noindent%
Institut de Math\'ematiques de Bourgogne%

\noindent%
9 avenue Alain Savary,%

\noindent%
21078 Dijon Cedex%

\noindent%
{\tt gregoire.menet@ac-amiens.fr}\\%

\noindent%
Simon \textsc{Brandhorst}%

\noindent%
Fakult\"at f\"ur Mathematik und Informatik,%

\noindent%
Universit\"at des Saarlandes,%

\noindent%
Campus E2.4, 66123 Saarbr\"ucken, Germany%

\noindent%
{\tt brandhorst@math.uni-sb.de}\\%

\noindent%
Stevell \textsc{Muller}%

\noindent%
Institut f\"ur Algebraische Geometrie,%

\noindent%
Leibniz Universit\"at Hannover,%

\noindent%
Welfengarten 1, 30167 Hannover, Germany%

Previous address:

Fakult\"at f\"ur Mathematik und Informatik,

Universit\"at des Saarlandes,

Campus E2.4, 66123 Saarbr\"ucken, Germany

\noindent%
{\tt muller@math.uni-hannover.de}%
\end{document}